\DeclareRobustCommand
\def\O{\Omega}
\def\dist{\mathrm{dist}}
\def\A{\mathfrak{A}}
\def\C{\mathbf{C}}
\def\bC{\mathbf{\overline{C}}}
\def\R{\mathbf{R}}
\def\Z{\mathbf{Z}}
\def\Rea{\mathrm{Re}\, }
\def\Ima{\mathrm{Im}\, }
\numberwithin{equation}{section}
\newtheorem{theorem}{Theorem}[section]
\newtheorem{lemma}[theorem]{Lemma}
\newtheorem{proposition}[theorem]{Proposition}
\theoremstyle{remark}
\newtheorem*{ack}{Acknowledgments}
\newtheorem{remark}[theorem]{Remark}
\newtheorem{example}[theorem]{Example}
\title{On conformal metrics of constant positive curvature in the plane}
\author{Walter Bergweiler, Alexandre Eremenko and James Langley
}
\date{}
\begin{document}
\maketitle
\begin{center}
\emph{Dedicated to Professor Vladimir A.\ Marchenko \\
on the occasion of his 100th birthday}
\end{center}
\begin{abstract}
We prove three theorems about solutions of $\Delta u+e^{2u}=0$ in the plane.
The first two describe explicitly all concave and quasiconcave solutions.
The third theorem says that the diameter of the plane with respect
to the metric with line element $e^{u}|dz|$ is at least $4\pi/3$,
except for two explicitly described families of solutions $u$.

\smallskip
2010 MSC 35B99, 35G20, 30D15.

\smallskip
Keywords: Liouville equation, positive curvature, meromorphic function,
spherical derivative.

\end{abstract}

\section{Introduction}

The general solution of the differential equation
\begin{equation}\label{0}
\Delta u+e^{2u}=0
\end{equation}
in a simply connected region in the plane was written by Liouville as
\begin{equation}\label{00}
u=\log\frac{2|f'|}{1+|f|^2},
\end{equation}
where $f$ is a meromorphic local homeomorphism, that is a meromorphic function
with only simple poles which satisfies $f'(z)\neq 0$. The geometric interpretation is
that the metric $\sigma$ with the line element 
$$\frac{2|f'(z)|}{1+|f(z)|^2}|dz|$$
is the pull-back of the standard metric on the unit sphere via $f$.
Here $f$ is called the developing map of the metric,
and the relation \eqref{00} will be assumed throughout the paper.

Expression (\ref{00}) 
is due to J. Liouville \cite{Li0,Li},
though the equivalent result that every two metrics of
the same constant curvature are locally isometric is contained
in the earlier paper of F. Minding \cite{M}.
Formula \eqref{00} for the general solution of \eqref{0} is widely used
in modern literature, see, for example \cite{02,Lin}.

In this paper, we discuss equation \eqref{0} in the plane.

In \cite[Thm.\ 1.6]{1}, solutions of \eqref{0} which are bounded from
above are completely described: they are exactly those for which
$f$ in (\ref{00}) is either linear-fractional or 
of the form 
\begin{equation}\label{20}
f(z)=L(e^{az+b}),
\end{equation}
where $L$ is a linear-fractional transformation 
and $a,b\in\C$, $a\neq 0$. It was noticed in \cite{1} that all solutions
of \eqref{0} with developing map of the form \eqref{20} are one-dimensional:
after a complex affine change of the variable,
they depend on one real variable only.

In this paper we prove two conjectures stated in \cite{1}.
Our first theorem proves Conjecture~2 of that paper.
\begin{theorem}
\label{thm1}
If $u$ is concave, then $f$ is of the form \eqref{20}.
\end{theorem}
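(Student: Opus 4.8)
The plan is to rephrase concavity as a pointwise inequality, reduce the theorem to the statement that the Schwarzian derivative of $f$ is a nonzero constant, and reach that conclusion through the classification of solutions bounded above.

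\emph{Reformulation.} Since $\operatorname{tr}D^2u=\Delta u=-e^{2u}<0$, the function $u$ is concave exactly when $\det D^2u\ge 0$; computing in $z,\bar z$ gives $\det D^2u=\tfrac14 e^{4u}-4|u_{zz}|^2$, so
$$u\ \text{is concave}\iff |u_{zz}|\le\tfrac14 e^{2u}\ \text{on}\ \C .$$
The Liouville identity $u_{zz}-u_z^2=\tfrac12\{f,z\}$ holds, where $\{f,z\}=(f''/f')'-\tfrac12(f''/f')^2$; since $f$ has only simple poles and $f'\neq 0$, and the Schwarzian is Möbius invariant, $\{f,z\}$ is entire. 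Furthermore $f$ is linear-fractional iff $\{f,z\}\equiv 0$, and $f$ has the form \eqref{20} iff $\{f,z\}$ is a nonzero constant. Thus Theorem~\ref{thm1} is equivalent to the assertion that concavity of $u$ forces $\{f,z\}$ to be a nonzero constant.

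\emph{Reduction to the bounded case.} Granting that $u$ is bounded above, \cite[Thm.\ 1.6]{1} gives that $f$ is linear-fractional or of the form \eqref{20}. The linear-fractional alternative is excluded: then $f$ maps $\C$ onto $\bC$ minus a point, so $\int_\C e^{2u}\,dx\,dy$ equals the spherical area $4\pi<\infty$; the classification of finite-mass solutions of \eqref{0} on $\C$ then gives $u(z)=c-\log(1+\rho^2|z-z_0|^2)$, which is convex along any ray from $z_0$ once $|z-z_0|>1/\rho$, contradicting concavity. (Equivalently, $\{f,z\}\equiv 0$ would force $|u_z|\le\tfrac12 e^u$ on all of $\C$, which fails.) Hence $f$ has the form \eqref{20}.

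\emph{The main point: $u$ is bounded above.} Concavity gives $|D^2u|^2\le e^{4u}$ (from $u_{xy}^2\le u_{xx}u_{yy}$ and $u_{xx},u_{yy}\le 0$, whence $|D^2u|^2\le (u_{xx}+u_{yy})^2=(\Delta u)^2$). Together with $\Delta|\nabla u|^2=2|D^2u|^2-4e^{2u}|\nabla u|^2$ and $\Delta e^{2u}=2e^{2u}(2|\nabla u|^2-e^{2u})$, this shows that $g:=|\nabla u|^2-\tfrac12 e^{2u}$ satisfies
$$\Delta g=2|D^2u|^2+e^{4u}-6e^{2u}|\nabla u|^2\le -6e^{2u}g\quad\text{on}\ \C .$$
Since a positive superharmonic function on $\C$ is constant, $g$ is not positive everywhere (else $\Delta g\equiv 0<-6e^{2u}g$), so $\{g\le 0\}\neq\emptyset$, and on $\{g>0\}$ the function $g$ is strictly superharmonic with boundary values $0$. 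Analysing $g$ — using $\Delta g+6e^{2u}g\le 0$ on the (in general unbounded, as for $f=e^z$) components of $\{g>0\}$, and the pinch $|\nabla u|^2\le\tfrac12 e^{2u}$ where $g\le 0$ — one shows $|\nabla u|$ is bounded, say $|\nabla u|\le S$. Given this, for each $z$ the divergence theorem and the equation yield
$$\pi\, e^{2(u(z)-S)}\le\int_{B(z,1)}\!e^{2u}\,dx\,dy=-\!\oint_{\partial B(z,1)}\!\!\partial_\nu u\,ds\le 2\pi S$$
(using $u\ge u(z)-S$ on $B(z,1)$), so $u(z)\le S+\tfrac12\log(2S)$ uniformly in $z$, i.e.\ $u$ is bounded above. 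Establishing the boundedness of $\nabla u$ is the main obstacle. A useful auxiliary device is a Zalcman-type rescaling $f_n(\zeta)=f(w_n+\rho_n\zeta)$ based at points where $e^u$ is large, whose locally uniform spherical limits are developing maps of concave, bounded-above solutions, hence — by the reduction above and \cite[Thm.\ 1.6]{1} — one-dimensional, which fixes the local profile of $u$ near its large values. One might also try to bypass boundedness, proving directly that $|u_{zz}|\le\tfrac14 e^{2u}$ forces $\{f,z\}$ constant via a Liouville argument for $V:=-4e^{-2u}u_{zz}$, which satisfies $V_{\bar z}=2(u_z-u_{\bar z}V)$ and $V_z=-2e^{-2u}\{f,z\}'$; the difficulty there is that $|V|$ need not be subharmonic and no a priori lower bound on $u$ is available.
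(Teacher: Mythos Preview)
Your proposal is not a proof: it contains an explicit and unresolved gap. Your strategy is to show that concavity forces $u$ to be bounded above and then invoke \cite[Thm.~1.6]{1}, and your route to boundedness is via the differential inequality $\Delta g+6e^{2u}g\le 0$ for $g=|\nabla u|^2-\tfrac12 e^{2u}$. The computations leading to that inequality are correct, but you do not actually deduce that $|\nabla u|$ is bounded. You write ``Analysing $g$ \ldots\ one shows $|\nabla u|$ is bounded'' and immediately concede that ``Establishing the boundedness of $\nabla u$ is the main obstacle.'' The set $\{g>0\}$ can be unbounded (as you note for $f=e^z$), so no maximum principle applies directly, and the sketched Zalcman rescaling and the auxiliary $V$ are offered only as heuristics, each with an acknowledged difficulty. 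Without this step, nothing has been proved beyond the case already handled in \cite{1}.

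For comparison, the paper's proof does not attempt to show that $u$ is bounded above; instead it assumes the contrary and reaches a contradiction by purely function-theoretic means. The finite-order case is dispatched via the sectorial asymptotics of \cite[Thm.~1.9]{1}. In the infinite-order case, the paper introduces ``directions of fast decrease'' of $u$ (guaranteed by the infinite order of $w=1/f'$), shows via convexity of the level set $\{u\ge 0\}$ that such a direction must point into a half-plane where $u<0$, and then derives a contradiction from the fact that a positive harmonic function in a sector (namely $\log|w|-\mathrm{const}$) grows at most polynomially, whereas a direction of fast decrease forces superpolynomial growth of $-u$ and hence of $\log|w|$. This argument uses concavity only through the geometry of the level sets and the linear lower bound on $-u$ along rays, not through the Hessian inequality you isolate; it bypasses entirely the question of bounding $|\nabla u|$.
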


This was proved in \cite{1} under the additional condition that $u$ is bounded
from above.  
In Section \ref{firstproof}, we give a  direct proof 
 of Theorem \ref{thm1}, but an alternative approach via differential equations
  delivers a stronger conclusion when
 $f$ is transcendental. A function $h\colon \C \to \R$ is called \textit{quasiconcave} if, for any 
 $a_1, a_2 \in \C$, we have $h(z) \geq \min \{ h(a_1), h(a_2) \}$ on the line segment from
 $a_1$ to $a_2$: this is equivalent to the condition that, for
 every $c \in \R$, the set $\{ z \colon h(z) \geq c \}$ is convex.  If $f$ is linear-fractional then via a rotation of the Riemann sphere it may be assumed that $f(\infty) = \infty$, so that each set 
 $\{ z \colon u(z) \geq c \}$ is  a  disk and $u$ is quasiconcave. 
 
\begin{theorem}
\label{thm1a}
If $u$ is quasiconcave, then $f$ is linear-fractional or of the form \eqref{20}.
Indeed, for transcendental $f$ not of the form \eqref{20}, 
and for any $M > 0$, there exist $a_1, a_2 \in \C$  such that 
\begin{equation}
u\left(\frac{a_1+a_2}2\right) < \min\{ u(a_1), u(a_2) \}  - M.
\label{notconcave}
\end{equation}
\end{theorem}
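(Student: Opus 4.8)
\emph{Proof strategy.} The plan is to reduce the statement to a property of the Schwarzian derivative of $f$ and then to exploit the asymptotics of the linear equation $w''+\psi w=0$. Set $\psi:=u_{zz}-u_z^2$. Differentiating and using \eqref{0} in the form $u_{z\bar z}=-\tfrac14e^{2u}$ gives $\partial_{\bar z}\psi=u_{zz\bar z}-2u_zu_{z\bar z}=0$, so $\psi$ is entire, and a direct computation from \eqref{00} shows $\psi=\tfrac12\{f,z\}$, the Schwarzian of $f$. Hence $\psi\equiv0$ precisely when $f$ is linear-fractional, while $\psi$ equals a nonzero constant precisely when $f$ has the form \eqref{20} (integrate $\{f,z\}\equiv\text{const}$ to obtain $f=L(e^{az+b})$). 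Moreover a rational developing map must be linear-fractional, since a rational local homeomorphism of the sphere has degree one by Riemann--Hurwitz. Thus both assertions of the theorem follow once one proves: \emph{if $\psi$ is entire and nonconstant, then \eqref{notconcave} holds for every $M>0$.}

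For this I would fix linearly independent solutions $w_1,w_2$ of $w''+\psi w=0$ with $f=w_1/w_2$ and Wronskian $w_1w_2'-w_1'w_2\equiv1$ (possible because $f$ determines $w_1,w_2$ up to a common constant factor). Then $f'=-1/w_2^2$, and substituting into \eqref{00} gives $e^{-u}=\tfrac12h$ with $h:=|w_1|^2+|w_2|^2>0$. Since $u=\log2-\log h$, inequality \eqref{notconcave} is equivalent to $h\big(\tfrac{a_1+a_2}2\big)>e^{M}\max\{h(a_1),h(a_2)\}$. The point is that the differential equation forces a strong asymmetry in the growth of $h$: along certain directions every solution is oscillatory and $h$ is comparatively small, whereas just off those directions the generic solution, and hence $h$, grows far faster.

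When $\psi$ is a polynomial of degree $n\ge1$, I would use classical asymptotic integration (the Liouville--Green/WKB method): there are $n+2$ equally spaced anti-Stokes rays $\arg z=\theta_k$, along each of which the leading term of $\Rea\int\sqrt{-\psi}$ vanishes so that every solution satisfies $|w(z)|=|z|^{-n/4}\exp\!\big(O(|z|^{n/2})\big)$, while in the sector between two consecutive ones the generic solution grows like $|z|^{-n/4}\exp\!\big(c|z|^{(n+2)/2}\big)$ with $c>0$. Since at most one of $w_1,w_2$ is recessive in that sector, $h(z)\ge|z|^{-n/2}\exp\!\big(2c|z|^{(n+2)/2}\big)$ there. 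As $\theta_{k+1}-\theta_k=2\pi/(n+2)\le2\pi/3$, taking $a_1=Re^{i\theta_k}$ and $a_2=Re^{i\theta_{k+1}}$ places the midpoint at distance $R\cos\frac{\pi}{n+2}\ge R/2$ along the bisecting ray, so that
\[
\frac{h\big(\tfrac{a_1+a_2}2\big)}{\max\{h(a_1),h(a_2)\}}\ \ge\ 2^{n/2}\exp\!\Big(2c\big(\tfrac R2\big)^{(n+2)/2}-O(R^{n/2})\Big)\ \longrightarrow\ \infty
\]
as $R\to\infty$, because $\tfrac{n+2}2>\tfrac n2$; choosing $R$ large enough gives \eqref{notconcave}.

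The main obstacle will be transcendental $\psi$, where the global Stokes geometry is no longer elementary; here I would localize. For large $r$, pick $z_0$ with $|z_0|=r$ and $|\psi(z_0)|=\max_{|z|=r}|\psi|$; by the Wiman--Valiron method $\psi(z)=\psi(z_0)(z/z_0)^{\nu}(1+o(1))$ on a disk about $z_0$ of radius $r\,\nu^{-1/2-\varepsilon}$, with central index $\nu=\nu(r)\to\infty$. On this disk $\psi$ is zero-free and one applies WKB to the coefficient $\psi(z_0)(z/z_0)^{\nu}$, which behaves locally like a polynomial of degree $\nu$, so that the local anti-Stokes curves through $z_0$ are essentially $\nu+2$ directions spaced $\sim2\pi/\nu$ apart. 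Choosing $a_1,a_2$ near $z_0$ along two of these directions whose midpoint falls into an adjacent local growth region, the exponent $\Rea\int_{z_0}\sqrt{-\psi}$ governing $h$ can be made arbitrarily large at the midpoint while staying controlled at the two endpoints, which again yields $h\big(\tfrac{a_1+a_2}2\big)\gg e^{M}\max\{h(a_1),h(a_2)\}$. The technical heart of the argument is to make these Wiman--Valiron and WKB estimates for $w''+\psi w=0$ rigorous and uniform on the disk about $z_0$ — the ``approach via differential equations'' alluded to above — and to verify that the geometric constraint (the midpoint of the chosen segment genuinely lies in the growth region) survives the localization.
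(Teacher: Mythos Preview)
Your proposal is correct and follows essentially the same route as the paper: reduce to the linear ODE $w''+\tfrac12\{f,z\}\,w=0$, treat the polynomial-coefficient case via the sectorial WKB asymptotics (your endpoints on adjacent anti-Stokes rays correspond to the paper's choice $a_j=r\exp\bigl((-1)^j i(\pi/(d+2)-\delta)\bigr)$, with the midpoint on the bisecting Stokes direction), and handle the transcendental case by Wiman--Valiron localisation near a maximum-modulus point followed by a Liouville/WKB estimate. The paper's execution differs only in cosmetic details---it works directly with $u$ via $f=f_1/f_2$ rather than your $h=|w_1|^2+|w_2|^2$, and it keeps the endpoints a fixed $\delta$ inside the sector to stay within the region where the asymptotics (and, in the transcendental case, the set $J_2$ obtained after removing the ``shadow'' of $B(0,R)$) are valid; you would need the same precaution when you make the argument rigorous.
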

If $f$ is linear-fractional or of the form $f=\phi(e^{az+b})$,
where $\phi$ is a rotation of the sphere, then the diameter 
of the plane with respect to the metric $\sigma$ is~$\pi$.
It was conjectured in~\cite[Question 8.1]{GL}, \cite[Conjecture~1]{1} that the diameter 
is strictly greater than $\pi$ otherwise. We shall prove the following stronger result.
\begin{theorem}
\label{thm2}
The diameter of the plane with respect to the metric $\sigma$
is at least $4\pi/3$ unless $f$ is linear-fractional or
of the form \eqref{20}.
\end{theorem}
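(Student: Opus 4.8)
The plan is to argue the contrapositive: assuming the diameter $D$ of the plane in the metric $\sigma$ is less than $4\pi/3$, we show that $f$ is linear-fractional or of the form \eqref{20}. Some soft reductions come first. A meromorphic local homeomorphism of $\C$ is nonconstant, hence omits at most two values (Picard), so its image $\Omega=f(\C)$ contains a pair of antipodal points of the unit sphere $S^2$ and therefore $D\ge\pi$; thus we are in the range $\pi\le D<4\pi/3$. Since $f$ has only simple poles and $f'\neq 0$, the metric $\sigma$ is a smooth Riemannian metric of Gaussian curvature $1$ and $f$ is a local isometry from $(\C,\sigma)$ onto $\Omega$ with the round metric, so $f$ carries $\sigma$-geodesics to great-circle arcs run at unit speed. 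A complete simply connected surface of curvature $1$ is the round $S^2$, which is compact; hence $(\C,\sigma)$ is never complete. I would also record that a \emph{rational} local homeomorphism of $\C$ is automatically linear-fractional: its $2\deg f-2$ critical points would all lie over $\infty$, making the ramification index there equal to $2\deg f-1\le\deg f$, so $\deg f=1$. Thus in the interesting case $f$ is transcendental and Theorem~\ref{thm1a} is available.

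Next I would pass to the metric completion $X$ of $(\C,\sigma)$, a complete length space of diameter $D$, and study the set $E=X\setminus\C$ of adjoined points. A technical lemma establishes a local normal form: $E$ consists of isolated points, and near each the metric is asymptotic to a spherical cone of angle $2\pi\theta$ with $\theta\in(0,\infty]$; single-valuedness of $f$ forces $\theta\in\{1,2,3,\dots\}\cup\{\infty\}$, and $\theta=1$ can occur only if $E$ is a single point filling the end of $\C$, in which case $X\cong\widehat{\C}$ and $f$ extends to a linear-fractional map. In $X$, minimizing geodesics are finite concatenations of great-circle arcs with breaks only at points of $E$, the turning at each break being at least $\pi$ on each side (the arc "passing through" a cone of angle $\ge2\pi$); and a circular sub-arc returning to the same point of $S^2$ is a full great circle of length $2\pi$. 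Since $D<4\pi/3<2\pi$, this already excludes closed sub-arcs and bounds the total arc length of any minimizing geodesic.

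The heart of the argument is to deduce from $D<4\pi/3$ that $f$ is a covering map of $\C$ onto $\Omega$. Granting this, the conclusion is immediate: $\Omega$ is $\widehat{\C}$ with $0$, $1$ or $2$ points removed; an unramified self-cover of $\widehat{\C}$ has degree $1$ and $\C\not\cong\widehat{\C}$, so $0$ points is impossible; a cover $\C\to\C$ is a homeomorphism, making $f$ linear-fractional; and a cover $\C\to\C^*$ is the universal cover $\exp$, composed with affine and linear-fractional maps, i.e.\ the form \eqref{20}. To get the covering property one assumes path-lifting fails: there is a great-circle arc $\beta\subset\Omega$, a point $q\in\beta$ and $w_0\in f^{-1}(q)$ whose geodesic lift reaches a point $\zeta\in E$ after spherical length $\ell$ along $\beta$ without continuing inside $\C$. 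Using the break/turning constraints together with $D<4\pi/3$ one argues that either $\ell$ is small and the picture at $\zeta$ so rigid that $\theta(\zeta)=\infty$ and everything is forced toward the spindle \eqref{20}, or else one can produce two points of $X$ joined only by paths tracing two successive length-$2\pi/3$ arcs of a great circle — the direct $2\pi/3$-arc being blocked by $\zeta$ — so that $D\ge4\pi/3$, a contradiction.

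The main obstacle, as the last paragraph suggests, is the geometric analysis behind the covering dichotomy. The set $E$ can be genuinely wild: for $f(z)=\int_0^z e^{t^2}\,dt$ it consists of several points of infinite cone angle, some lying over non-omitted values, and $X$ is neither compact nor a finite cone sphere, so one cannot simply classify $X$. One must combine a delicate local study of geodesics near the various kinds of points of $E$ (finite cone angle, infinite cone angle, points over regular versus omitted values) with a global "shortest paths must wind" argument, and it is exactly this detour estimate — two consecutive thirds of a great circle that cannot be shortcut past a conical obstruction — that produces the constant $4\pi/3=2\cdot(2\pi/3)$ and, presumably, accounts for the bound not being sharp (the conjecture being $D>\pi$).
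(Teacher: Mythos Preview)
Your proposal is an outline rather than a proof, and the gap is exactly where you yourself locate it: the ``covering dichotomy'' in the penultimate paragraph is asserted, not established. You say that if path-lifting of some great-circle arc fails at a boundary point $\zeta\in E$, then either the configuration is rigid enough to force \eqref{20} or else one can exhibit two points joined only by paths of total length $\ge 4\pi/3$. Neither alternative is argued. In particular, you have not explained why the obstructing point $\zeta$ prevents the short $2\pi/3$-arc from being realized by \emph{some} curve in $\C$, nor why the detour must consist of two $2\pi/3$-arcs rather than, say, one $2\pi/3$-arc plus a short transversal. Your ``technical lemma'' that $E$ consists of isolated conical points is also not proved and is not true a priori: non-isolated singularities of $f^{-1}$ do occur, and handling them is a separate step (in the paper it is Proposition~4.1, which already yields diameter $\ge 2\pi$). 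Finally, the cone-angle taxonomy $\theta\in\{1,2,\ldots\}\cup\{\infty\}$ is misleading: since $f'\neq 0$ and $f$ has only simple poles, there are no algebraic branch points, and every isolated singularity of $f^{-1}$ is logarithmic, so the only value that ever appears for transcendental $f$ is $\theta=\infty$.

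For comparison, the paper's route is quite different and does not attempt to prove that $f$ is a covering. It first disposes of the non-isolated case and then, assuming all singularities are isolated and there are at least three, runs a disk-growing argument (push the center of a disk along the perpendicular bisector of two boundary singularities until a third appears) to find an open disk in the completion with at least three singularities on its boundary. Since the boundary has length at most $2\pi$, two of them are at distance at most $2\pi/3$; the geodesic arc $\gamma$ between them (of length $t\le 2\pi/3$) separates $\C$, both complementary components have dense image by Lindel\"of, and the elementary spherical estimate $\sup_{w}\dist(w,f(\gamma))=\pi-t/2\ge 2\pi/3$ then yields points on opposite sides at $\sigma$-distance $\ge 4\pi/3$. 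The constant $4\pi/3$ therefore arises from the pigeonhole bound $2\pi/3$ on the boundary plus Lemma~4.2, not from any detour-around-a-cone mechanism.
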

It is proved in \cite[Cor.\ 3.4]{1} that if $f(z)=e^z+t$, 
then the diameter is equal to $\pi+2\arctan |t|$.
So it can be any number in $[\pi,2\pi)$.

We do not know whether the estimate $4\pi/3$ in Theorem \ref{thm2} is
the best possible. At the end of the paper we give an example
of a metric $\sigma$ such that the plane with this metric
has infinite diameter. 
This example answers another question asked in \cite[Question 8.1]{GL}.

\section{Proof of Theorem \ref{thm1}}\label{firstproof}

Since the case when $u$ is bounded from above
has been treated in \cite{1}, we may assume that $u$ is unbounded from above.
We distinguish the cases whether $f$ in \eqref{00} has finite or infinite order,
see~\cite[Section 2.1]{Goldberg2008} for the definition of the 
order of a meromorphic function.
The asymptotic formula  (1.6) in \cite[Thm.\ 1.9]{1} shows that
$u$ cannot be concave when $f$ is of finite order, unless
$f$ is of the form (\ref{20});
for further details in this case, see Section \ref{secondproof}. 

Thus we limit ourselves to the case that $f$ is of infinite order.
Let us call a point $a\in\bC$ exceptional if $f(z)\to a$ as $z\to\infty$
uniformly with respect to $\arg z$ in a sector of opening
$\pi/3$. Since there can be at most 6 exceptional points,
we can apply a rotation of the sphere to $f$ to ensure that $\infty$ is
not exceptional.

Then $f'$ is a meromorphic function of infinite order without zeros,
so $f'=1/w$, where $w$ is entire of infinite order, that is
\begin{equation}\label{infi}
\limsup_{z\to\infty}\frac{\log\log|w(z)|}{\log|z|}=\infty.
\end{equation}
Consider the sets
$$E=\{ z\colon u(z)\geq 0\}\quad \mbox{and}\quad D=\C\backslash E.$$
Since $u$ is concave, $E$ is convex. Since $u$ is unbounded from above,
$E$ is unbounded. Let us assume without loss of generality that
$0\in E$ and 
\[
u(0)>0.
\]
This can be achieved by translation of the independent
variable. Since $E$ is unbounded, closed and convex, and contains $0$, there is at least one ray 
$$\ell_\theta=\{ z=t e^{i\theta}\colon t\geq 0\}$$
contained in $E$.
Let $I$ be the set of arguments $\theta\in \R/(2\pi\Z)$ of the rays $\ell_\theta$ which 
are contained in $E$.
Unless $E$ is a parallel strip, in which case $I$ consists of two points, 
$I$ is a closed interval of length at most $\pi$.

Let us call $\theta_0\in\R/(2\pi\Z)$ a {\em direction of fast decrease}
if there is a sequence 
$(z_n)$ tending to $\infty$ 
such that
\begin{equation}\label{theta0}
\lim_{n\to\infty}\frac{\log^+(-u(z_n))}{\log|z_n|}=\infty\quad\mbox{and}\quad
\arg z_n\to\theta_0.
\end{equation}
Since 
\begin{equation}
\label{ufw}
u(z)\leq\log 2|f'(z)|=-\log|w(z)|+\log 2,
\end{equation}
(\ref{infi}) implies that there exists a direction of fast decrease.

We claim that there exists a direction of fast decrease which does not
belong to $I$. Clearly an interior point of $I$ cannot be a direction of
fast decrease, so the claim will follow if we show that there are more than
two directions of fast decrease. Let $(z_n)$ be a sequence which satisfies
(\ref{theta0}), and let $\theta_0\in(\theta_1,\theta_2)$ where
$|\theta_1-\theta_2|<\pi$. Draw a segment $[a_n,b_n]$ through $z_n$ 
such that $\arg a_n=\theta_1$ and $\arg b_n=\theta_2$.
By concavity, the minimum of $u$ on $[a_n,b_n]$ is attained either
at $a_n$ or at $b_n$, so one of $\theta_1$ and $\theta_2$ is also
a direction of fast decrease. Since $\theta_1$ and $\theta_2$
can be chosen in many ways there are many directions of fast decrease.

We conclude that there is a half-plane $H\subset D$  
which contains a ray in a direction of fast decrease.

By rotating the independent variable, we assume that $H$ is a left
half-plane, say $H=\{ z\colon \Rea z<- c\}$ where $c>0$.

Considering the restriction of $u$ to the intervals $[-c+iy,0]$, we see that
the derivatives $(d/dt)u((-c+iy)t)$ are negative somewhere in these intervals.
The concavity of $u$ implies that for every $\epsilon>0$ we have
\begin{equation}\label{est}
u(re^{i\theta})\leq -kr,\quad |\theta-\pi|\leq \frac{\pi}{2}-\epsilon,\quad r>r_0,
\end{equation}
where $k$ and $r_0$ are some positive constants that depend on $\epsilon$.
We denote the angular sector in (\ref{est}) by $A$, and fix $\epsilon$
so that $A$ contains a ray of fast decrease.

Consider now the set $G=\{ z\colon |w(z)|>2\}$. 
By~\eqref{ufw} we have $G\subset D$.
On the other hand, we claim that 
\begin{equation}\label{es}
\log|w(z)|\geq -u(z)-C, \quad z\in A,
\end{equation} 
if $r_0$ is large enough. Here $C$ is a positive constant.
To prove (\ref{es}) we first notice that for every sequence $(z_n)$ in $A$, tending
to infinity, the sequence $(f(z_n))$ on the Riemann sphere is a Cauchy sequence
(with respect to the spherical metric). This follows from
the estimate of the spherical distance
$$\dist(f(z_n),f(z_m))\leq\int_{z_m}^{z_n}e^{u(z)}|dz|,$$
and the estimate (\ref{est}). Therefore there exists $ a\in\bC$ such that
$f(z)\to a$ as $z\to\infty$, $z\in A$. This limit $a$ is an exceptional
point as defined in the beginning of the proof, and thus $a\neq\infty$.
Using (\ref{est}) we see that
$$\dist(a,f(z))\leq\int_{-\infty}^ze^{u(z)}|dz|\leq\frac{1}{k}e^{-kr_0}.$$ 
Choosing $r_0$ large enough, we achieve that there exists a constant $C_1$ such that
 $|f(z)|\leq C_1$ for $z\in A$. Thus 
$$u(z)\geq \log(2|f'(z)|)-\log(1+C_1^2) =-\log |w(z)|+\log 2-\log(1+C_1^2)$$
for $z\in A$ and 
we obtain (\ref{es}).

Since $u(z)\to -\infty$ in $A$ we conclude from (\ref{es})
that $\log|w|$ is bounded from below in $A$,
say $\log|w(z)|\geq C_2$ for $z\in A$.
Thus $v=\log|w|-C_2$ is a positive harmonic function in~$A$.
It follows (see \cite[p.~87]{L}) that in any proper subsector
of $A$, the function $v$ cannot grow faster than a power.
In particular, for a sequence $(z_n)$ as in \eqref{theta0},
where $\theta_0$ is a direction of fast decrease with $\ell_{\theta_0}\subset A$,
we have $v(z_n)\leq |z_n|^{C_3}$ for some $C_3>0$.
Together with \eqref{es} this yields that
\[
-u(z_n)\leq \log|w(z_n)|+C= v(z_n)+C_2+C\leq |z_n|^{C_3}+C_2+C,
\]
contradicting~\eqref{theta0}.

\section{Proof of Theorem \ref{thm1a}}\label{secondproof}
Assume that $f$ is transcendental but not of the form \eqref{20}.
Then the Schwarzian $2A$ of $f$ is a
non-constant entire function \cite{Hil2,schwarzian} and
\begin{equation}
f = \frac{f_1}{f_2} ,
\label{1}
\end{equation}
where $f_1, f_2$ are linearly independent solutions of
\begin{equation}
w''+Aw = 0.
\label{2}
\end{equation}

Suppose first that $A$ is a polynomial of degree $d > 0$. Then by the classical theory of asymptotic integration \cite{1,Hil2}
 there are $d+2$ equally spaced Stokes rays which divide the plane into open sectors, on each of which $f(z)$ tends to some asymptotic value, these values being different on adjacent sectors. Thus
 by a rotation of the independent variable 
 it may be assumed that $f(z)$ tends to a finite asymptotic   value on a sector of opening $2\pi/(d+2)$,
 symmetric about the positive real axis. 
 Hence the sectorial asymptotics for \eqref{2} give a constant $c > 0$ with the following property.
Let $M, \delta$ be positive constants with $\delta $ small: then 
$$
u(re^{i\theta}) = - c r^{(d+2)/2} \cos \left( \frac{(d+2)\theta}2 \right) + o \left( r^{(d+2)/2} \right) 
$$
as $r \to \infty$, uniformly for real $\theta$ with $| \theta | \leq  \pi/(d+2) - \delta $ (see \cite{Hil2} and
\cite[Thm.~1.9, formula (1.6)]{1}). Since $\delta$ is small it is then clear that 
\eqref{notconcave} holds with $a_j = r \exp( (-1)^j i (\pi/(d+2) - \delta) )$ and $r$ sufficiently large. 

Assume henceforth that $A$ is transcendental. The  proof will use estimates, analogous to the sectorial asymptotics
in the polynomial case,
which were proved in  \cite{La5,schwarzian,blnewqc}
for solutions of (\ref{2}) on a neighborhood 
of a  maximum modulus point of $A$.
Let $N(r) $ be
the central index of $A$, and take $\phi(r) = N(r)^{1/3} $ in the notation of  \cite[Sections 2, 3]{schwarzian}.
Let $r > 0$ be large and lie outside the  exceptional set $E_0$ arising from applying the Wiman-Valiron theory
\cite{Hay5}  to  $A$,
and take $z_r$ with $|z_r| = r$ and $|A(z_r) | = M(r, A)$. Then
\begin{equation}
 \phi(r) = N(r)^{1/3} = o( \log |A(z_r) | )
 \label{a0}
 \end{equation}
 and
 \begin{equation}
A(z) \sim \left( \frac{z}{z_r} \right)^N A(z_r) , \quad
\frac{A'(z)}{A(z)} \sim \frac{N}z, \quad \frac{A''(z)}{A(z)} \sim \frac{N^2}{z^2} ,
\quad N = N(r) ,
\label{a1}
\end{equation}
for $z \in D(z_r, 8)$, where $D(z_r, L)$ denotes the logarithmic rectangle
\begin{equation}
D(z_r, L)
=
 \{ z_r e^\tau \colon | {\rm Re} \, \tau | \leq L N(r)^{-2/3}  , \quad
| {\rm Im } \, \tau | \leq L N(r)^{-2/3} \}.
\label{a2}
\end{equation}
Let $w_r = z_r \exp( -4 N(r)^{-2/3} )$; then \cite[formula (10)]{schwarzian} gives, on  $D(z_r, 4)$,
\begin{equation}
\begin{aligned}
Z &= \frac{2w_r A(w_r)^{1/2}}{N+2} + \int_{w_r}^z A(t)^{1/2} \, dt \\
&\sim \frac{2z A(z)^{1/2}}{N+2}  \sim Z(z_r) \left( \frac{z}{z_r} \right)^{(N+2)/2} ,\\
\log \frac{Z(z)}{Z(z_r)} &= \frac{N+2}2 \, \log \frac{z}{z_r} + o(1), \\
\frac{d \log Z}{\log z} &= \frac{zA(z)^{1/2}}{Z} \sim \frac{N+2}2 .
\label{a3}
\end{aligned}
\end{equation}
As in the previous case, fix $M > 0$ and let $\delta $ be small and positive.
The following is a slightly stronger assertion than \cite[Lemma 3.1]{schwarzian}.
\begin{lemma}
\label{lemcover}
Let  $Q$ be a large positive integer,
 and let $r \not \in E_0$ be large. Then $\log Z$ is a univalent function of
$\log z$ on $D(z_r, 7/2)$ and there exist at least $Q$ pairwise disjoint simple islands $H_q$
in $D(z_r, 3)$ each mapped univalently by $Z$ onto the closed logarithmic rectangle
$$
J_1 = \left\{ Z\colon R \leq |Z| \leq S, \quad | \arg Z | \leq \pi - \delta \right\} ,
$$
in which
$$
R = |Z(z_r)| \exp \left( - N(r)^{1/3} \right), \quad S = |Z(z_r)| \exp \left(  N(r)^{1/3} \right) ,
$$
while $R$ and $S/R$ are both large.
\end{lemma}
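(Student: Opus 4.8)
The statement is close to \cite[Lemma 3.1]{schwarzian}, and the plan is to adapt its proof: pass to the coordinates $\tau = \log(z/z_r)$ and $W = \log Z$, where $W$ is regarded as a function of $\tau$ (so $\tau = 0$ corresponds to $z = z_r$ and $W(0) = \log Z(z_r)$). By \eqref{a3}, $W$ is then a nearly linear function of $\tau$ with slope $c := (N+2)/2$, where $N = N(r)$, and the islands will be built by hand. First I would note that in the variable $\tau$ the logarithmic rectangle $D(z_r,L)$ of \eqref{a2} becomes the genuine closed square $Q_L = \{\tau\colon |\Rea\tau|\le LN^{-2/3},\ |\Ima\tau|\le LN^{-2/3}\}$, which in particular is convex, and I would fix the branch of $\log Z$ on $D(z_r,4)$ so that $\Ima W(0)\in(-\pi,\pi]$; then $\Ima W(0) = O(1)$, a fact used below. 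Now $dW/d\tau = d\log Z/d\log z \sim c$ uniformly on $Q_4$ by \eqref{a3}, so in particular $\Rea(dW/d\tau) > 0$ throughout $Q_4$ once $r\notin E_0$ is large enough; since $Q_{7/2}$ is convex, the standard fact that a holomorphic map whose derivative has positive real part on a convex domain is univalent (Noshiro--Warschawski) gives at once that $\log Z$ is a univalent function of $\log z$ on $D(z_r,7/2)$, which is the first assertion.

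Next I would estimate the image. Writing $dW/d\tau = c(1+\eta(\tau))$ with $\epsilon(r):=\sup_{Q_4}|\eta|\to 0$ and integrating along segments in the convex set $Q_3$, one gets $|W(\tau)-W(0)-c\tau|\le \epsilon(r)\,c\,|\tau|$ on $Q_3$. A winding-number comparison of the closed curve $W(\partial Q_3)$ with the boundary of the square $W(0)+c\,Q_3$ then shows that $W(Q_3^\circ)$ contains the open square centred at $W(0)$ of half-side $(1-o(1))\cdot 3cN^{-2/3} = (1-o(1))\tfrac32 N^{1/3}$, which for $r$ large exceeds $\tfrac54 N^{1/3}$. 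Since $W$ is univalent on the larger set $Q_{7/2}$, the inverse $W^{-1}$ is a homeomorphism on that square and carries it into $Q_3$.

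Then I would build the islands. In the $W$-plane the preimage of the closed rectangle $J_1$ under $Z = e^W$ is exactly the family of pairwise disjoint closed rectangles $\widetilde{R}_k = [\log R,\log S]\times[2\pi k-\pi+\delta,\ 2\pi k+\pi-\delta]$, $k\in\Z$, with consecutive ones separated by a gap $2\delta$. Here $\log S-\log R = 2N^{1/3}$, the horizontal midpoint of each $\widetilde{R}_k$ is $\log|Z(z_r)| = \Rea W(0)$, and $\Ima W(0) = O(1)$; hence $\widetilde{R}_k\subset W(Q_3^\circ)$ whenever $2\pi|k|\le \tfrac54 N^{1/3}-\pi-O(1)$, which holds for a number of consecutive integers $k$ that tends to infinity with $r$, so in particular for at least $Q$ of them once $r$ is large. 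Choosing $Q$ such integers $k_1,\dots,k_Q$ and putting $H_q = W^{-1}(\widetilde{R}_{k_q})$ produces $Q$ pairwise disjoint compact subsets of $D(z_r,3)$, each mapped homeomorphically by $W$ onto $\widetilde{R}_{k_q}$; since $\widetilde{R}_{k_q}$ has height $2(\pi-\delta)<2\pi$, the exponential $\zeta\mapsto e^\zeta$ is injective on it, so $Z = e^W$ maps $H_q$ conformally onto $J_1$, and because $\overline{H_q}\subset D(z_r,3)\subsetneq D(z_r,7/2)$ with $Z(\partial H_q)\subset\partial J_1$, these $H_q$ are genuine simple islands over $J_1$. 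It then remains to observe that $S/R = e^{2N^{1/3}}\to\infty$ because $N(r)\to\infty$, while $\log R = \log r+\tfrac12\log M(r,A)-N^{1/3}+O(\log N) = \log r+(\tfrac12-o(1))\log M(r,A)\to\infty$ by \eqref{a0}, so $R$ is large as well.

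The step I expect to be the main obstacle is the size estimate in the second paragraph: the horizontal width $\log S-\log R = 2N^{1/3}$ of $J_1$ has to fit inside $W(Q_3)$, whose half-side is only of order $\tfrac32 N^{1/3}$, so the argument has just the single safety factor $\tfrac32>1$, and the comparison with the linear model must be done carefully enough that the $o(1)$ errors do not consume it; this is exactly why the statement constructs the islands in $D(z_r,3)$ while only claiming univalence on the larger rectangle $D(z_r,7/2)$. Everything else --- the bookkeeping of branches of $\log Z$ and the verification that $\Ima W(0)$ stays bounded, so that the rectangles $\widetilde{R}_k$ straddling $\arg Z = 0$ really do lie in the image --- is routine given the asymptotics \eqref{a3} and \eqref{a0}.
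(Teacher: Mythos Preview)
Your proposal is correct and follows essentially the same approach as the paper. The paper's own proof is terse: it simply notes that the univalence of $\log Z$ as a function of $\log z$ and the existence of the $Q$ islands follow from \eqref{a3} ``exactly as in \cite{schwarzian}'', and then verifies only the new ingredient, namely that $R$ is large, via the inequality
\[
\log R \geq \tfrac12 \log M(r,A) + \log r - \log(N+2) - N(r)^{1/3} - O(1),
\]
appealing to \eqref{a0}. You have unpacked the argument from \cite{schwarzian}: passing to $\tau=\log(z/z_r)$, using Noshiro--Warschawski for univalence, a Rouch\'e/winding-number comparison to show the image of $Q_3$ contains a square of half-side $(1-o(1))\tfrac32 N^{1/3}$, and then pulling back the strips $\widetilde R_k$. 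Your estimate for $\log R$ matches the paper's, and your observation that $S/R=e^{2N^{1/3}}$ is large is immediate.
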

\begin{proof} 
The first two assertions follow from (\ref{a3}), exactly
 as in \cite{schwarzian}. To see that $R$ is large, use (\ref{a3}) to write
$$
\log R \geq  \frac12 \log M(r, A) + \log r - \log (N+2) - N(r)^{1/3} - O(1),
$$
the right-hand side being large and positive by (\ref{a0}).
\end{proof} 

Next, for $z$ in some $H_q$, apply to (\ref{2}) the Liouville transformation \cite{Hil2}
\begin{equation}
\label{liouville}
W(Z) = A(z)^{1/4} w(z),
\end{equation}
so that
$W$ satisfies
\begin{equation}
\frac{d^2W}{dZ^2} + (1-F_0(Z))W = 0,
\label{a7}
\end{equation}
in which, by (\ref{a1}),
$$
F_0(Z) =
\frac{A''(z)}{4 A(z)^2} - \frac{5 A'(z)^2}{16 A(z)^3} , \quad |F_0(Z)| \leq \frac3{|Z|^2} \quad \hbox{on $J_1$}.
$$
Then \cite[Lemma 2.1]{blnewqc} gives solutions $U_1(Z), U_2(Z)$ of (\ref{a7}) which satisfy
\begin{equation}
\label{a9}
U_1(Z) \sim e^{-iZ}, \quad U_2(Z) \sim e^{iZ}, \quad W(U_1, U_2) \sim 2i ,
\end{equation}
uniformly for $Z$ in the set
\begin{equation}
\label{a10}
J_2 = J_1 \setminus \left\{ Z\colon  {\rm Re} \, Z < 0, \quad |{\rm Im} \, Z | < R  \right\} .
\end{equation}
The restriction to $J_2$ is a consequence of the method of proof, which requires removal of
the ``shadow'' of the disk $B(0, R)$ \cite{Hil2}.
Hence (\ref{liouville}) and (\ref{a9}) deliver solutions $u_1, u_2$ of (\ref{2}) satisfying
\begin{equation}
\label{a11}
u_1(z) \sim A(z)^{-1/4} e^{-iZ}, \quad u_2(z) \sim  A(z)^{-1/4} e^{iZ}, \quad W(u_1, u_2) \sim 2i ,
\end{equation}
on the preimage $H_q'$ of $J_2$ in $H_q$.

\subsection{Two line segments lying in the same $H_q'$}

The next step is to choose two line segments, both  lying  in the same $H_q'$, one of
which will be used to show that $u $ is not concave. By (\ref{a3}) there exist $p \in \R$ and $z_r'$ satisfying
\begin{equation}
- \frac{4\pi}{N+2} < p < \frac{4\pi}{N+2} ,  \quad z_r' = z_r e^{ip}, \quad  |Z(z_r')| \sim |Z(z_r)|, \quad \arg Z(z_r') = 0 .
\label{c2}
\end{equation}
Now set
\begin{equation}
\begin{aligned}
\zeta_1^+ &= z_r' e^{i 4 \delta/(N+2) }, \quad
\zeta_2^+  = z_r' e^{i 2( \pi - 2 \delta)/(N+2) }, \\ 
\zeta_1^- &= z_r' e^{ - i 4 \delta/(N+2) }, \quad
\zeta_2^- = z_r' e^{- i 2( \pi - 2 \delta)/(N+2) }.
 \label{c3}
\end{aligned}
\end{equation}
Let $S^+$ be the line segment from $\zeta_1^+$ to $\zeta_2^+$,
let $S^-$ be that from $\zeta_1^-$ to $\zeta_2^-$, and let $\Sigma$ be the arc of the circle $|z| = r$ from $\zeta_1^+$ to $\zeta_1^-$ via $z_r'$: these  lie in $D(z_r, 3)$, by (\ref{a2}), (\ref{c2}) and the fact that
$N$ is large. Indeed, elementary trigonometry gives
\[
r \geq |z| \geq r \cos \left( \frac{\pi-4\delta}{N+2} \right) \geq r \left( 1 - O \left( \frac1{N^2} \right) \right)
\quad \hbox{for} \quad z \in S^+ \cup S^- ,
\]
from which it follows, in view of (\ref{a1}) and (\ref{a3}), that
\begin{equation}
|A(z)| \sim |A(z_r)| =M(r, A)  \quad \hbox{and} \quad
|Z(z)| \sim T = |Z(z_r)|
\label{c5}
\end{equation}
for $z \in S^+ \cup S^- \cup \Sigma $. On the other hand (\ref{a3}), (\ref{c2}) and (\ref{c3}) also yield
$$
 - \pi + 2  \delta + o(1) \leq
 \arg Z(z) \leq \pi - 2 \delta + o(1)
 \quad \hbox{for} \quad z \in S^+ \cup S^- \cup \Sigma .
$$
Combining this estimate with (\ref{a10}), (\ref{c5}) and the fact that $T/R = \sqrt{S/R}$ is large then shows that
$S^+, S^-$ indeed lie in the same $H_q'$.

Next,
let $\zeta_3^+$ be the midpoint of $S^+$, and $\zeta_3^-$ that of $S^-$. Then  (\ref{a3}), (\ref{c2}) and (\ref{c3}) deliver
\begin{equation}
\begin{array}{l}
\arg( Z(\zeta_1^\pm ))  = \pm 2  \delta + o(1) ,  \\
\arg( Z(\zeta_2^\pm )) = \pm ( \pi - 2  \delta ) + o(1),  \\
\arg( Z(\zeta_3^\pm  )) = \pm \frac{\pi}2 + o(1).
\end{array}
    \label{c6}
\end{equation}

\subsection{Estimates for $u$ }

On that $H_q'$ which contains $S^+ \cup S^-$, write
\begin{equation}
f_1 = C_1 u_1 + C_2 u_2, \quad f_2 = D_1 u_1 + D_2 u_2, \quad C_j, D_j \in \C  .
\label{a}
\end{equation}
Since the spherical derivatives of
$f$ and $1/f$ agree, it may be assumed that, among $C_1, C_2, D_1, D_2$, either
 $D_1$
or $ D_2$ has maximal modulus. Hence (\ref{1}) and (\ref{a}) yield constants $\alpha, \beta, \gamma$ with
\begin{equation}
\label{b1}
f = \frac{f_1}{f_2} = \frac{\alpha v + \beta}{\gamma v + 1} ,
\quad
\max \{ |\alpha|, |\beta| , |\gamma| \} \leq 1, \quad v = \left( \frac{u_1}{u_2} \right)^{\pm 1} .
\end{equation}

Suppose first that $D_1$ has maximal modulus. Then $v = u_2/u_1$ and, on $H_q'$, 
(\ref{a11}) delivers
\begin{equation}
\log |v| = -2 \, {\rm Im} \, Z + o(1) ,
\label{b1a}
\end{equation}
as well as
\begin{equation}
\begin{aligned}
f' &=  \frac{(\alpha-\beta\gamma) v' }{(\gamma v + 1 )^2} =  \frac{(\alpha-\beta\gamma)  }{(\gamma v + 1 )^2} \, \frac{W(u_1,u_2)}{u_1^2}  \\
&\sim \left(
\frac{\alpha-\beta\gamma}{(\gamma v + 1 )^2} \right)  \frac{2i}{u_1^2}
 \sim \left(
\frac{\alpha-\beta\gamma}{(\gamma v + 1 )^2} \right)  \, 2i A^{1/2} v .
\label{b3}
\end{aligned}
\end{equation}
Thus
(\ref{c5}), (\ref{c6}), and (\ref{b1a}) imply that
\begin{equation}
\log |v(\zeta_1^+)| \sim  \log |v(\zeta_2^+)| \sim  -2 T \sin (2 \delta ) ,
\quad
\log |v(\zeta_3^+)| \sim  -2 T .
\label{b2}
\end{equation}
It then follows from (\ref{b1})  that,  at the three points
$\zeta_1^+, \zeta_2^+ , \zeta_3^+$, both
$v$  and $f - \beta $ are small, so that,
$$
u( \zeta_j^+) 
= \log \frac{|\alpha - \beta \gamma|}{1+|\beta|^2} + \log 4
 + \frac12 \log M(r, A)  +  \log |v(\zeta_j^+)| + o(1) .
$$
Since $\delta$ is small,   while $T$ is large, (\ref{b2}) now implies that
\eqref{notconcave} holds with $a_j = \zeta_j^+$.
This completes the proof that $u$ is not quasiconcave,
provided that  $D_1$ has the largest modulus among $C_1, C_2, D_1, D_2$.
On the other hand, if $|D_2|$ is maximal then the same argument goes through using $v = u_1/u_2$ and the points
$\zeta_1^-, \zeta_2^-, \zeta_3^-$.
\phantom{}\hfill$\Box$

\section{Diameter of the plane with the metric $\sigma$. Proof of Theorem~\ref{thm2}}

Since the case of linear-fractional $f$ has been dealt with in \cite{1},
and a rational function of degree greater than $1$ cannot be locally univalent in $\C$,
we assume $f$ is transcendental.

We begin by recalling some notation and facts from the theory of
singularities of inverses of meromorphic functions; see, for example,
\cite{BE,0,01}.

Let $a$ be a point in $\bC$. We denote by $\A(a)$ the set of all open simply connected
neighborhoods of $a$. A (transcendental) {\em singularity} of $f^{-1}$ over $a$ is a function
$\O\mapsto V(\O)$ which assigns to every $\O\in\A(a)$
a connected component $V(\O)$ of the preimage $f^{-1}(\O)$ such that
$$\O_1\subset\O_2\Rightarrow V(\O_1)\subset V(\O_2),$$
and 
$$\bigcap_{\O\in\A}V(\O)=\emptyset.$$
A singularity over $a$ exists if and only if $a$ is an {\em asymptotic value};
that is, there exists a curve $\gamma\colon [0,1)\to\C$ such that
$\gamma(t)\to\infty$ and $f(\gamma(t))\to a$ as $t\to 1$.

If a region $D\subset\bC$ contains no asymptotic values, then the restriction 
of $f$ on any component of $f^{-1}(D)$ is a covering.

Singularities can be considered as elements
of a completion of $\C$ with respect to a
certain metric adapted to $f$, see \cite{0}. The open sets $V(\O)$ are called
{\em neighborhoods} of a singularity. A singularity is {\em isolated}
if it has a neighborhood $V(\O)$
which is not a neighborhood of any other singularity.
This condition implies that the restriction
\begin{equation}\label{cov}
f\colon V(\O)\backslash f^{-1}(a)\to\O\backslash\{ a\}
\end{equation}
is a covering. So it must be a universal covering, since we assumed
that $f$ has an essential singularity at $\infty$.
Singularities for which (\ref{cov}) is a universal covering are
called the {\em logarithmic singularities}
of $f^{-1}$ in the classical literature.

In the case when all singularities are isolated,
the metric completion of $(\C,\sigma)$
consists of $\C$ and one point for each singularity.
In the general case of non-isolated
singularities, the singularities can also be interpreted as elements of a
metric completion, but with respect to a different metric; see
\cite{0}. We will denote the metric completion of $(\C,\sigma)$ by
$\widetilde{\C}$. It will be used only in the case when
all singularities are isolated.

We will need several lemmas. 

\begin{lemma}
\label{lem41}
Let $f$ be a non-constant meromorphic function
and $\gamma$ a simple curve which consists of two asymptotic curves
with common starting point and distinct asymptotic values. Let $D$
be one of the two components of $\C\backslash\gamma$. Then
the restriction $f\vert_D$ has a dense image in $\bC$.
\end{lemma}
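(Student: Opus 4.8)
The plan is to exploit the two asymptotic curves comprising $\gamma$ together with the fact that $f$ omits at most a small set of values on $D$. Denote by $a_1\ne a_2$ the two asymptotic values, reached along the two branches of $\gamma$ as one travels out to $\infty$. Pick any $b\in\bC\setminus\{a_1,a_2\}$ and any neighbourhood $U$ of $b$; I want to show $f(D)\cap U\ne\emptyset$. Suppose for contradiction that $f(D)\cap U=\emptyset$, i.e.\ $f\vert_D$ omits the whole open set $U$.

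First I would normalise: composing with a M\"obius transformation of the sphere, assume $b=\infty$ and that $U$ is the complement of a closed disk, so that $f$ is \emph{bounded} on $D$, and $a_1,a_2$ are finite and distinct. Now the idea is to apply a Phragm\'en--Lindel\"of / harmonic measure argument in $D$. The boundary $\gamma=\partial D$ splits into two arcs $\gamma_1,\gamma_2$ (the two asymptotic curves), along which $f\to a_1$ and $f\to a_2$ respectively as $z\to\infty$. Since $f$ is bounded and analytic on $D$, the function $g=(f-a_1)/(f-a_2)$ is a bounded analytic function on $D$ with $g\to 0$ along $\gamma_1$ and $g\to\infty$ along $\gamma_2$; but $g$ is bounded, contradicting the second statement — more carefully, $1/g=(f-a_2)/(f-a_1)$ is also bounded on $D$ and $1/g\to 0$ along $\gamma_2$. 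So both $g$ and $1/g$ are bounded analytic functions on $D$, hence $|g|$ is bounded above and below there, contradicting $g\to0$ along $\gamma_1$ unless $g$ is not actually bounded below — the point is that a bounded analytic function tending to $0$ along one boundary arc and to $\infty$ (as a limit of $1/g\to0$) along another, with $D$ a Jordan domain bounded by these two arcs meeting only at a finite point and at $\infty$, forces $g\equiv0$ by the two-constants theorem, which is absurd since $f$ is non-constant. To make this rigorous I would exhaust $D$ by the subdomains $D\cap\{|z|<\rho\}$, estimate harmonic measure of the two boundary arcs from below (each is unbounded, so by Beurling-type estimates the harmonic measure of the ``far'' part of each arc at a fixed interior point does not vanish as $\rho\to\infty$), and conclude $\log|g|$ is simultaneously $\le$ something tending to $-\infty$ and $\ge$ something tending to $+\infty$, a contradiction.

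The main obstacle is the behaviour of $f$ on the part of $\partial D$ near the common starting point of the two asymptotic curves, and the fact that $D$ need not be a nice Jordan domain in the plane: $\gamma$ is only assumed simple, so $D$ is simply connected but its boundary may be wild. I would handle this by working not with $D$ itself but with its image under a Riemann map $\varphi\colon\mathbb{D}\to D$ (or onto the upper half-plane), transferring the problem to a fixed domain; the two asymptotic curves correspond to two boundary arcs accumulating at a single prime end (the one carrying $z\to\infty$), and boundedness of $f\circ\varphi$ plus the radial/angular limit theory of bounded analytic functions (Fatou, and the fact that a bounded analytic function on $\mathbb D$ cannot have two distinct ``boundary cluster values'' forced on two arcs abutting the same point unless it is constant along Stolz angles there) yields the contradiction. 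An alternative, perhaps cleaner, route: since $D$ contains no asymptotic value in $U$ (as $f(D)\cap U=\emptyset$), standard results on singularities of $f^{-1}$ — already recalled in the excerpt — show $f$ restricted to each component of $f^{-1}$ of a small disk around any point of $U$ is a covering; chase this to see the single component $D$ would have to cover $\bC\setminus(\text{small set})$ in a way incompatible with $f\to a_1, f\to a_2$ along the two ends of $\partial D$. Either way, the crux is converting ``$f$ has two distinct asymptotic values along the boundary of $D$'' into a genuine obstruction to $f\vert_D$ avoiding an open set, and I expect the harmonic-measure estimate near $\infty$ along the two unbounded boundary arcs to be where the real work lies.
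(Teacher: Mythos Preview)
Your reduction is exactly the paper's: assume $f|_D$ omits a neighbourhood of some $b$, send $b\mapsto\infty$ by a M\"obius map, and obtain $f$ bounded on $D$ with distinct limits $a_1\neq a_2$ along the two ends of $\partial D=\gamma$. At that point the paper finishes in one sentence by invoking Lindel\"of's theorem (with references to Goldberg--Ostrovskii, p.~179, and Valiron, p.~81): a bounded holomorphic function on such a domain cannot have two distinct asymptotic values at $\infty$; applying the same to $1/(f-a)$ handles an arbitrary finite $a$.

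Everything you write after the reduction is an attempt to reprove Lindel\"of from scratch, and that is where the proposal gets tangled. The auxiliary $g=(f-a_1)/(f-a_2)$ need not be bounded on $D$ --- nothing prevents $f$ from taking the value $a_2$ inside $D$ --- so the ``$g$ and $1/g$ both bounded'' step fails as written. Your fallback to harmonic measure and the two-constants theorem is the right idea, but should be run with the subharmonic functions $\log|f-a_j|$ directly rather than $\log|g|$. Your Riemann-map route also works and is in fact the standard proof: since $\gamma\cup\{\infty\}$ is a Jordan curve on $\bC$, the domain $D$ is a Jordan domain on the sphere, Carath\'eodory sends $\infty$ to a single boundary point of the disk, and a bounded analytic function on the disk cannot have two distinct asymptotic values along curves terminating at the same boundary point. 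For the same reason your worries about wild boundaries and prime ends are unnecessary. In short, the plan matches the paper, but you should recognise the decisive step as Lindel\"of's theorem and cite it rather than rediscover it with shaky intermediate claims.
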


\begin{proof}
A theorem of Lindel\"of (see \cite[p.~179]{Goldberg2008} or \cite[p.~81]{L})
yields that $f\vert_D$ cannot be bounded.
Applying this to $1/(f-a)$ shows that $f\vert_D$ cannot omit a 
neighborhood of a point $a\in\C$.
\end{proof}

\begin{remark}
Using a deeper result of Heins \cite[Thm.~4]{H} one can actually show
that $f\vert_D$ omits at most two values in $\bC$.
\end{remark}

\begin{lemma}
\label{lem42}
Let $K\subset\bC$ be a geodesic arc of length $t\in (0,\pi)$: then
$$\sup_{z\in\bC}\dist(z,K)=\pi-\frac{t}{2}.$$
\end{lemma}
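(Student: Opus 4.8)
\textbf{Proof proposal for Lemma~\ref{lem42}.}

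The plan is to use the fact that the sphere $\bC$, with its standard metric of constant curvature $1$, is homogeneous and that geodesics are great-circle arcs. After applying an isometry of the sphere I may assume $K$ is a sub-arc of a fixed great circle, say the equator, centered at a chosen point $p_0$; then $K = \{ p(s) : |s| \le t/2 \}$ where $p(s)$ is the arc-length parametrization of the equator with $p(0) = p_0$. The function to be maximized is $z \mapsto \dist(z, K) = \min_{|s| \le t/2} \dist(z, p(s))$, and by compactness of $\bC$ this supremum is attained. The first step is to reduce the optimization over $z$ to the meridian great circle through the poles and $p_0$: given any $z$, its nearest point $q$ on the equator is well defined whenever $z$ is not a pole, and $\dist(z,K) = \dist(z,q)$ if $q \in K$, while if $q \notin K$ then the nearest point of $K$ to $z$ is the endpoint $p(\pm t/2)$ of $K$ closest to $q$ along the equator. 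In either case one checks, by the spherical triangle inequality or an explicit computation, that moving $z$ along the circle of constant distance to the relevant reference point toward the meridian through $p_0$ does not decrease $\dist(z,K)$; hence the maximizing $z$ can be taken on that meridian.

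Having restricted to the meridian, the second step is an explicit one-variable computation. Parametrize the meridian by arc length $\phi$ measured from $p_0$, so the point at parameter $\phi$ has distance to $p(s)$ given by the spherical law of cosines, $\cos \dist = \cos\phi\cos s$ for the relevant configuration (distance from a point on one great circle to a point on an orthogonal great circle, both through $p_0$). Then $\dist(z,K)$ as a function of $\phi$ is $\arccos(\cos\phi \cdot \cos(t/2))$ when the foot of the perpendicular lies in $K$, which happens for $\phi$ in a neighborhood of $\pm\pi/2$; maximizing this over $\phi$ gives $\phi = \pm\pi/2$ and the value $\arccos(0) = \pi/2$ — but this is the distance only to the \emph{nearest} endpoint, and one must check the point $z$ antipodal-like to the midpoint of $K$. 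The cleanest choice is $z$ = the pole: then $\dist(z, p(s)) = \pi/2$ for every $s$, so $\dist(z,K) = \pi/2$; but the true maximizer is the point on the meridian on the far side, at signed arc length $\pi - (t/2)$ from the midpoint of $K$... here I would instead argue directly: the point $z^*$ diametrically opposite the midpoint $p(0)=p_0$ of $K$ has $\dist(z^*, p_0) = \pi$ and $\dist(z^*, p(\pm t/2)) = \pi - t/2$, and since distance to the nearer endpoint governs, $\dist(z^*, K) = \pi - t/2$. For the upper bound: for any $z$, let $m$ be the midpoint of $K$; then $\dist(z,K) \le \dist(z,m) \le \pi$, which is too weak, so instead use that the two endpoints $e_\pm$ of $K$ satisfy $\dist(e_+, e_-) = t$, hence by the triangle inequality $\dist(z,e_+) + \dist(z,e_-) \ge$ ... no — rather $\min\{\dist(z,e_+),\dist(z,e_-)\} \le \dist(z,K)$ trivially, and $\dist(z,e_+) + \dist(z,e_-) \le \dist(z,e_+) + \dist(e_+,e_-) + \dist(e_+, z)$ is circular; the correct bound comes from $\dist(z,e_+) \le \pi$ and $\dist(z,e_-)\le\pi$ together with $\dist(e_+,e_-)=t$, which via the triangle inequality gives $\min\{\dist(z,e_+),\dist(z,e_-)\} \le \pi - t/2$ after noting that if both were $> \pi - t/2$ then their "sum-type" relation with the arc would be violated — precisely, on the great circle containing $K$ the two endpoints are distance $t$ apart, and any point $z$ projects to some point of that circle; the two complementary arc-distances from that projection to $e_+$ and $e_-$ cannot both exceed $\pi - t/2$ since the shorter of them is at most $(2\pi - t)/2 = \pi - t/2$ only if $z$ projects into the complementary arc, and at most $t/2 \le \pi - t/2$ if into $K$ itself — combined with $\dist(z,K) = \dist(z, \text{projection}) + \dist(\text{projection}, K)$ being bounded by $\pi/2 + (\pi - t/2 - \text{something})$...

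I will organize the upper bound as follows to avoid the tangle above: fix $z$ and let $c$ be its nearest point on the great circle $\Gamma \supset K$; standard spherical geometry gives $\dist(z,w) = \arccos(\cos\dist(z,c)\cos\dist(c,w))$ for all $w \in \Gamma$, so $\dist(z,K) = \arccos\big(\cos\dist(z,c)\cdot \max_{w\in K}\cos\dist(c,w)\big)$, and $\max_{w\in K}\cos\dist(c,w) = \cos(\dist(c,K))$ where $\dist(c,K) \le \pi - t/2$ because $c$ lies on the circle $\Gamma$ and the complement of $K$ in $\Gamma$ is an arc of length $2\pi - t$ whose midpoint is at distance exactly $\pi - t/2$ from $K$ — this reduces everything to the one-dimensional circular case, where the identity $\sup_{c \in \Gamma}\dist(c,K) = \pi - t/2$ is immediate. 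Then $\dist(z,K) = \arccos(\cos\dist(z,c)\cos\dist(c,K)) \le \arccos(\cos(\pi/2)\cos(\pi - t/2))$ is the wrong direction since $\cos(\pi - t/2) < 0$; rather, since $\arccos$ is decreasing and $\cos\dist(z,c) \in [0,1]$ while $\cos\dist(c,K)$ can be negative, the product is minimized (giving the largest $\arccos$) by taking $\cos\dist(z,c)$ as large as possible when $\cos\dist(c,K) < 0$, i.e. $\cos\dist(z,c) = 1$, $z = c$, giving $\dist(z,K) = \dist(c,K) \le \pi - t/2$; and if $\cos\dist(c,K)\ge 0$ the product is nonnegative so $\dist(z,K) \le \pi/2 \le \pi - t/2$ (as $t < \pi$). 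Equality is attained at the midpoint of the complementary arc, on $\Gamma$ itself.

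The step I expect to be the genuine obstacle is making the reduction to the great circle $\Gamma$ fully rigorous — specifically, justifying the product formula $\dist(z,w) = \arccos(\cos\dist(z,c)\cos\dist(c,w))$ for all $w$ on a great circle $\Gamma$ and $c$ the foot of the perpendicular from $z$, together with the claim that this $c$ achieves $\dist(z,K) = \dist(z,c) $ shifted appropriately — i.e. that the nearest point of $K$ to $z$ is the nearest point of $K$ to $c$ along $\Gamma$. This is classical spherical trigonometry (it is the right-angled spherical triangle relation), but it must be stated carefully, including the degenerate cases where $z$ is a pole of $\Gamma$ (then $\dist(z, \cdot) \equiv \pi/2$ on $\Gamma$ and the conclusion $\dist(z,K) = \pi/2 \le \pi - t/2$ holds directly). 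Once that reduction is in place, both the lower bound (exhibit the antipode of the midpoint of $K$, or the relevant point of $\Gamma$) and the upper bound follow from the elementary one-dimensional computation on the circle, and the supremum is attained, hence it is a maximum equal to $\pi - t/2$.
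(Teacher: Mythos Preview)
Your final argument is correct, though you reach it only after several false starts that you yourself abandon mid-sentence; the write-up should be pruned to the last two paragraphs. The key idea you settle on---let $\Gamma$ be the great circle containing $K$, let $c$ be the foot of the perpendicular from $z$ to $\Gamma$, and use the spherical Pythagorean relation $\cos\dist(z,w)=\cos\dist(z,c)\,\cos\dist(c,w)$ for all $w\in\Gamma$---is sound and reduces the problem to the one-dimensional maximization of $\arccos(x\cos\gamma)$ over $x\in[0,1]$ and $\gamma\in[0,\pi-t/2]$, which you analyze correctly (minimum of $x\cos\gamma$ is $\cos(\pi-t/2)$, attained at $x=1$, $\gamma=\pi-t/2$). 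The degenerate pole case is handled as you note.

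This is a genuinely different route from the paper. The paper places $K$ on the unit circle $\{e^{i\phi}:|\phi|\le t/2\}\subset\C$ (which is the equator under stereographic projection), replaces spherical distance by the monotonically equivalent chordal distance $\chi$, exploits the symmetries $z\mapsto 1/\bar z$ and $z\mapsto\bar z$ to reduce to $z=re^{i\theta}$ with $r\in[0,1]$, $\theta\in[0,\pi]$, and then checks by a direct two-case computation ($\theta>t/2$ versus $\theta\le t/2$) that $\dist_\chi(z,K)\le\dist_\chi(-1,K)$. Your approach is more intrinsic and avoids coordinates almost entirely; the paper's is more hands-on but requires no appeal to spherical trigonometry beyond the chordal--spherical monotonicity. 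Either is adequate here; yours would transplant more easily to other symmetric spaces, while the paper's has the virtue of being fully self-contained.
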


\begin{proof} 
Without loss of generality, we may assume that
\[
K=\{e^{i \phi}\colon - t/2 \leq \phi \leq t/2 \}.
\]
We claim that the maximal distance from $z$ to $K$ is attained for $z=-1$.
The conclusion follows from this, since the points in $K$ that are
closest to $z=-1$ are the points $e^{\pm it/2}$, implying that $\dist(-1,K)=\pi-t/2$.

Since the spherical distance
is a strictly increasing function of the chordal distance,
the above claim follows if we show that $z=-1$ has the maximal chordal distance from~$K$.
Let $\chi$ denote the chordal metric and $\dist_\chi(z,K)$ the distance 
from $z$ to $K$ with respect to this metric.

By symmetry, it is sufficient to
consider the case that $|z|\leq 1$ and $\Ima z\geq 1$ so that
$z$ has the form $z = re^{i \theta} $ with $0 \leq r \leq 1$ and $0 \leq \theta \leq \pi$. 
If $t/2<\theta\leq \pi$, then, 
using that $0<t<\pi$ and hence $\cos(\pi-t/2)<0$, we find that
\[
\begin{aligned}
\dist_\chi(z,K)^2
&\leq \chi(z,e^{it/2})^2
=\frac{4|z-e^{it/2}|^2}{(1+|z|^2)(1+|e^{it/2}|^2)} 
\\ &
=\frac{2(1+r^2-2r\cos (\theta-t/2))}{1+r^2} 
=2- \frac{4r}{1+r^2} \cos(\theta-t/2)
\\ &
\leq 
2- \frac{4r}{1+r^2} \cos(\pi-t/2)
\leq 
2- 2 \cos(\pi-t/2)
\\ &
=\chi(-1,e^{it/2})^2
=\dist_\chi(-1,K)^2 .
\end{aligned}
\]
On the other hand, if $0\leq\theta\leq t/2$, then
\[
\dist_\chi(z,K)^2
\leq \chi(z,e^{i\theta})
=2- \frac{4r}{1+r^2}\leq 2
< \dist_\chi(-1,K)^2 .
\]
It follows that the maximum of $\dist_\chi(z,K)$ and hence of $\dist(z,K)$ is attained for $z=-1$,
as claimed above.
\end{proof}

Our proof of Theorem \ref{thm2} splits into two parts:

\begin{proposition}
\label{prop41}
If $f$ has a non-isolated singularity, then
the diameter of $(\C,\sigma)$ is at least $2\pi$.
\end{proposition}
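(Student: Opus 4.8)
The plan is to produce, for every $\rho > 0$, two points of $\C$ lying at $\sigma$-distance at least $2\pi - 4\rho$; letting $\rho \to 0$ then gives that the diameter of $(\C,\sigma)$ is at least $2\pi$. The mechanism behind the value $2\pi$ is that a curve on the sphere joining a point near $a$ to a point near its antipode $a^*$, and back, has spherical length close to $2\pi$, since $\dist(a,a^*)=\pi$. Write $a$ for a point over which $f^{-1}$ has a non-isolated singularity, $a^*$ for its antipode, and $B=\{w\in\bC\colon\dist(w,a)<\rho\}$. I aim to build a simple curve $\Gamma$ tending to $\infty$ at both ends, with $f(\Gamma)\subset B$, which is moreover a union of two asymptotic curves with a common starting point and distinct asymptotic values. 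Granting this, $\Gamma\cup\{\infty\}$ is a Jordan curve and Lemma~\ref{lem41} applies to each of the two components $D_1,D_2$ of $\C\setminus\Gamma$, so $f$ has dense image on $D_1$ and on $D_2$; choose $z_j\in D_j$ with $\dist(f(z_j),a^*)<\rho$. Any path $p$ from $z_1$ to $z_2$ must meet $\Gamma$, hence $f\circ p$ comes within $\rho$ of $a$ while starting and ending within $\rho$ of $a^*$, so the spherical length of $f\circ p$ exceeds $2\pi-4\rho$. Since $\sigma$ is the pull-back of the spherical metric by $f$, the $\sigma$-length of $p$ equals the spherical length of $f\circ p$; thus the $\sigma$-distance from $z_1$ to $z_2$ is at least $2\pi-4\rho$. (If one could only confine $f(\Gamma)$ to a neighborhood of a geodesic arc of length $t$, the same argument with Lemma~\ref{lem42} would yield only $2\pi-t$; confining it to a genuine cap is what produces the clean bound.)

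To construct $\Gamma$, fix $\rho$, put $\O=B\in\A(a)$, and let $V=V(\O)$ be the neighborhood of the given singularity corresponding to $\O$, so $f(V)\subset B$ and the tail of some asymptotic curve with $f\to a$ lies in $V$. I claim $V$ also contains the tail of an asymptotic curve on which $f$ tends to some $b\ne a$ (necessarily $\dist(a,b)\le\rho$). Suppose not. Then no asymptotic value in $B\setminus\{a\}$ is reachable from within $V$, so $f$ restricts to a covering of the punctured disk $B\setminus\{a\}$ on each component of $V\setminus f^{-1}(a)$. By the reasoning used above for isolated singularities --- $f$ has no critical points and an essential singularity at $\infty$ --- each such component is either mapped homeomorphically onto $B\setminus\{a\}$ by $f$, carrying an ordinary $a$-point but no singularity, or is a universal covering of $B\setminus\{a\}$, carrying a logarithmic singularity over $a$. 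Since $V$ contains the singularity over $a$, some component $C$ is of the latter type; but then $C$ cannot abut an $a$-point, since near an $a$-point $f$ is one-to-one while a covering has constant fibre cardinality. Hence $C$ is both open and closed in the connected set $V$, so $V=C$ and the singularity over $a$ in $V$ is unique and isolated, contradicting the hypothesis. This proves the claim. Now join the tail in $V$ of an asymptotic curve with $f\to a$ to the tail in $V$ of one with $f\to b$ by an arc inside the connected open set $V$, and simplify the union to a simple curve $\Gamma\subset V$; then $\Gamma$ tends to $\infty$ at both ends, $f(\Gamma)\subset B$, and taking a base point on the joining arc displays $\Gamma$ as two asymptotic curves with common start and distinct asymptotic values $a\ne b$, exactly as Lemma~\ref{lem41} requires.

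The step I expect to be the main obstacle is the claim in the second paragraph: turning ``non-isolated singularity over $a$'' into the presence, in an arbitrarily small neighborhood of that singularity, of an asymptotic curve tending to a \emph{different} value. This rests on the covering description of preimages of regions free of locally reachable asymptotic values, combined with the exclusion of finite-degree singularities afforded by $f$ being a local homeomorphism with an essential singularity at $\infty$; some care is needed to apply the covering criterion to $f|_V$ rather than to $f$ globally, and to handle $V\setminus f^{-1}(a)$ componentwise. The remaining ingredients --- the splitting of $\C$ by the Jordan curve $\Gamma\cup\{\infty\}$, the elementary length estimate for $f\circ p$, and the appeal to Lemma~\ref{lem41} --- are routine.
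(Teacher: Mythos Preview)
Your proposal is correct and follows essentially the same route as the paper's proof: locate inside $V(B)$ an asymptotic curve to some $b\neq a$ (the paper argues, as you do, that otherwise $f$ would cover $B\setminus\{a\}$ from $V$, forcing the singularity to be isolated), join it to an asymptotic curve to $a$ to form a simple curve $\Gamma$ with $f(\Gamma)\subset B$, then invoke Lemma~\ref{lem41} on both complementary domains to find preimages of a point near $a^*$ and estimate the length of any connecting path by $2\pi-4\rho$. One minor remark: since $V$ is connected and $f^{-1}(a)\cap V$ is discrete, $V\setminus f^{-1}(a)$ has only one component, so your ``componentwise'' dichotomy collapses to a single case, and your open--closed argument then directly gives $V=C$; this streamlines the step you flagged as the main obstacle.
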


\begin{proof}
By rotating the sphere we may assume that there is a non-isolated
singularity over $0$. Let $D_\epsilon$ be the open spherical disk
of radius $\epsilon>0$ about zero. Then $V(D_\epsilon)$ must be a neighborhood
of some other singularity. We claim that this other singularity can be chosen with
asymptotic value $b\neq 0$. Indeed, if all singularities of which $V(D_\epsilon)$
is a neighborhood were to lie over $0$, then $f\colon V(D_\epsilon)\to D\backslash\{0\}$
would be a covering, contradicting our assumption that the singularity over $0$
is not isolated.

Thus there exists a curve $\gamma \subseteq V(D_\epsilon)$, both ends of which  tend to
$\infty$ in $\C$, and which is asymptotic for two  values, that is,
$f(z)\to 0$ and $f(z)\to b$ as $z\to\infty$ along the two ends of $\gamma$.
By removing any loops we may assume that $\gamma$ is simple.
So it divides the plane into two parts, which we denote by $D_1$ and $D_2$.
By construction, $f(\gamma)\subset D_\epsilon$.

By Lemma~\ref{lem41}, $f(D_1)$ and $f(D_2)$ are dense open subsets of $\bC$.
Thus there exists a point $w$ in the spherical disk of radius $\epsilon$
centered at $\infty$ which is contained in both $f(D_1)$ and $f(D_2)$,
say $w=f(z_j)$ where $z_j\in D_j$ for $j=1,2$.
Then $\dist(z_j,f(\gamma))\geq \pi-2\epsilon$ for $j=1,2$.
Hence the distance between $z_1$ and $z_2$ is at least $2\pi-4\epsilon$.
Since $\epsilon>0$ can be taken arbitrarily small, the conclusion follows.
\end{proof}

\begin{proposition}
\label{prop42}
If all singularities of $f$ are isolated, and there
are at least $3$ of them, then the
diameter of $(\C,\sigma)$ is at least $4\pi/3$.
\end{proposition}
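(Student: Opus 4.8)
The plan is to reduce to the situation of Proposition~\ref{prop41} and then exploit it with Lemmas~\ref{lem41} and~\ref{lem42}. Since $f$ is transcendental it has an essential singularity at $\infty$, so every isolated singularity of $f^{-1}$ is logarithmic, i.e.\ the covering \eqref{cov} is universal. Fix three such singularities, lying over values $a_1,a_2,a_3\in\bC$ (not necessarily distinct). The first step is an elementary remark of spherical geometry: among any three points of $\bC$ some two are at distance $\le 2\pi/3$. Indeed, if $\dist(a_1,a_j)>2\pi/3$ for both $j=2,3$, then $a_2$ and $a_3$ both lie in the ball of radius $\pi-2\pi/3=\pi/3$ about the antipode of $a_1$, whence $\dist(a_2,a_3)<2\pi/3$. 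Relabel so that $\dist(a_1,a_2)=\ell\le 2\pi/3$.

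The heart of the matter is a variant of the proof of Proposition~\ref{prop41} using only the two singularities over $a_1$ and $a_2$, which I carry out first under the assumption $a_1\ne a_2$. Fix a small $\epsilon>0$ and disjoint logarithmic tracts $V_1,V_2$ over $a_1,a_2$, on which $f$ stays within spherical distance $\epsilon$ of $a_1$, resp.\ of $a_2$. I would construct a simple curve $\gamma\subset\C$ tending to $\infty$ at both ends, along which $f\to a_1$ at one end and $f\to a_2$ at the other, whose image $f(\gamma)$ lies in the $\epsilon$-neighbourhood of the geodesic arc $[a_1,a_2]$. Such a $\gamma$ is obtained by lifting $[a_1,a_2]$: started from a suitable sheet deep inside $V_1$ the lift is an asymptotic curve with $f\to a_1$ and image inside the $\epsilon$-disk about $a_1$, and one continues it outside $V_1$; it either runs out to $\infty$ inside a tract over $a_2$, or escapes to $\infty$ over some intervening asymptotic value $b$ on $[a_1,a_2]$ — in which case $f(\gamma)$ lies in the still shorter arc $[a_1,b]$ and $b$ replaces $a_2$ below. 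Once $\gamma$ is in hand, $\C\setminus\gamma$ has two components $D_1,D_2$, on each of which $f$ has dense image by Lemma~\ref{lem41}. By Lemma~\ref{lem42} there is $w^\ast\in\bC$ with $\dist(w^\ast,[a_1,a_2])=\pi-\ell/2$; choosing $z_j\in D_j$ with $\dist(f(z_j),w^\ast)<\epsilon$, and using that the $\sigma$-length of a curve equals the spherical length of its $f$-image, we get
$$\dist(z_j,\gamma)\ \ge\ \dist\!\big(f(z_j),f(\gamma)\big)\ \ge\ \pi-\tfrac{\ell}{2}-2\epsilon .$$
Every path from $z_1$ to $z_2$ meets $\gamma$, so $\dist(z_1,z_2)\ge 2\pi-\ell-4\epsilon\ge 4\pi/3-4\epsilon$, and letting $\epsilon\to0$ settles this case.

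It remains to treat the case in which every pair among $a_1,a_2,a_3$ of distance $\le 2\pi/3$ consists of singularities over a common value; then all three chosen singularities lie over at most two distinct values, one of which — call it $v$ — carries at least two of them. Here I would run the same construction with two tracts over $v$: the separating curve $\gamma$ then has both ends over $v$, so $f(\gamma)$ stays within distance $\epsilon$ of $v$, and the argument produces two points at distance $\ge 2\pi-2\epsilon$, i.e.\ diameter $\ge 2\pi$. The extra point to supply is density of $f$ on the two sides of $\gamma$, which no longer comes from Lemma~\ref{lem41}; but if $f$ omitted a neighbourhood of a value on one side, a theorem of Lindel\"of would force that side to be a neighbourhood of a further singularity over $v$, which together with the assumed isolatedness of the singularities can be excluded (or used to restart the argument with one more singularity).

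The step I expect to be the main obstacle is the topological construction of $\gamma$ with $f(\gamma)$ confined to (a neighbourhood of) the geodesic arc — precisely, ruling out or circumventing the possibility that the lift of $[a_1,a_2]$ emanating from $V_1$ terminates at a finite preimage of $a_2$ rather than escaping to $\infty$; this is where the logarithmic (not merely isolated) nature of the singularities, the freedom to choose among the sheets of $V_1$, and a small perturbation of the arc away from the countably many asymptotic values must be used. The bookkeeping in the last case, together with the verification that the Lindel\"of alternative cannot persist, is secondary but still needs care.
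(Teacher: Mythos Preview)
Your outline diverges from the paper at the very first move, and the divergence is where the gap lies. You apply the pigeonhole to the three asymptotic \emph{values} in $\bC$ and then try to lift the spherical geodesic $[a_1,a_2]$ out of a tract $V_1$. The obstacle you flag at the end is genuine and is not handled by the fixes you suggest. A lift of $[a_1,a_2]$ emanating from a sheet of $V_1$ follows a particular branch of $f^{-1}$; if that branch happens to have no singularity on $(a_1,a_2]$, the lift simply ends at a finite $a_2$-point. Passing to another of the infinitely many sheets of $V_1$ need not help, since $f$ may have infinitely many $a_2$-points, one for each sheet; and perturbing the arc cannot help because its terminal point $a_2$ is fixed. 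Nothing in your setup forces the chosen singularity over $a_2$ to be visible from $V_1$ along this geodesic. Your case $a_1=a_2$ has a second soft spot: with equal asymptotic values at the two ends of $\gamma$, Lindel\"of only tells you that if $f$ omits a neighborhood on one side then $f\to v$ there, i.e.\ that side is yet another tract over $v$; this does not by itself restore density or restart the argument.

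The paper avoids the lifting problem altogether by a Delaunay-type construction. Rather than fixing two singularities and trying to connect them, one fixes a \emph{branch} $\phi$ of $f^{-1}$ and enlarges the spherical disk on which $\phi$ is defined: first expand until a singularity $w_1$ of $\phi$ appears on the boundary, then enlarge keeping $w_1$ on the boundary until a second singularity $w_2$ appears, then slide the center along the great circle equidistant from $w_1,w_2$. Either a third singularity eventually appears on the boundary circle, or $\phi$ continues to all of $\bC\setminus\{w_1,w_2\}$ and $f$ is exponential, contradicting the hypothesis of at least three singularities. With three singularities on a circle of length at most $2\pi$, two of them are within $2\pi/3$ along the boundary, and the arc between them lies inside the disk where $\phi$ is already defined; its $\phi$-image is a curve in $\C$ that goes to $\infty$ at both ends \emph{automatically}, because those boundary points are by construction obstructions to continuing $\phi$. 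From there Lemmas~\ref{lem41} and~\ref{lem42} finish exactly as in your endgame. The point is that the paper manufactures the curve $\gamma$ together with the branch of $f^{-1}$ that lifts it, rather than choosing the target arc first and hoping a lift exists.
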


\begin{proof}
Our strategy is to find a geodesic $\gamma\subset{\widetilde{\C}}$
which connects two
singularities, and such that the length of $\gamma$ is at most $2\pi/3$.
Then application of Lemma~\ref{lem41} and Lemma~\ref{lem42} will give the
required diameter estimate, similar to the argument in the proof of Proposition~\ref{prop41}.

To do this, it is sufficient to find an open metric disk in $\widetilde{\C}$
which contains no singularities, and has at least three singularities
on the boundary. Indeed, since the length of the boundary of any
spherical disk is at most $2\pi$, there will be two singularities
with the distance between them at most $2\pi/3$, and the shortest
curve between them (which exists since $\widetilde{\C}$ is complete)
will contain a geodesic arc of length at most $2\pi/3$
connecting two singularities.

It remains to show that such a disk can be found unless $f$ is an exponential.
We start with some point $w^*\in\bC$ which has an $f$-preimage $z^*\in \C$.
Let $\phi$ be the germ of $f^{-1}$ such that $\phi(w^*)=z^*$.

Let $D$ be the open spherical disk of the largest radius centered at $w^*$ to which $\phi$
has an analytic continuation. Then $\partial D$ contains
a singularity $w_1$ of $\phi$. If this singularity is unique,
then using the assumption that singularities are isolated,
we can cover $\partial D$ by finitely many disks, of which only one
contains $w_1$. Then $\phi$ has an analytic continuation to a larger
disk $D'$ such that $D\subset D'$ and $w_1\in\partial D'$. 
Among such disks $D'$ there is one with 
 maximal spherical radius, and its boundary must contain
at least two singularities, $w_1$ and $w_2$.
We denote this maximal disk $D'$ by $D_0$, and its center by
$w_0$. Let $\phi_0$ be the germ at $w_0$ obtained by the analytic
continuation of $\phi$ that we just described.

If $D_0$ has at least three singularities on the boundary, we are finished.

Otherwise, consider the curve $\beta$ passing through $w_0$
such that the points on this curve are at equal
distance from the two singularities. 
This curve is a great circle.
We assume that 
the curve is parameterized by the arc length and that $\beta(0)=w_0$.

Since the disk $D_0$ contains only two singularities $w_1$ and $w_2$ on
the boundary, we can cover the boundary by finitely many disks
of which only two contain singularities. Then there is a one-parametric
family of disks $D_t$ centered at $\beta(t)$ whose radii
are equal to the distances from $\beta(t)$ to $w_1$ and $w_2$.
Since we assumed that the only singularities on $\partial D_0$
are $w_1$ and $w_2$, the germ $\phi_0$ admits an immediate analytic
continuation from $D_0$ to $D_t$ with small $t$.

Now consider the supremum and infimum of the values of $t$
for which this analytic continuation is possible. If either of them is finite,
we obtain a disk with three singularities on the boundary.

If an analytic continuation is possible to all disks $D_t$ for $t\in\R$,
we will show that $f$ is in fact a universal covering of $\bC\backslash\{ w_1,w_2\}$, that is, $f$ is an exponential function.

To prove this last statement we consider a curve $\delta\colon\R\to\bC\setminus\{w_1,w_2\}$
with $\delta(0)=w_0$.
We ``project'' this curve $\delta$ onto the curve $\beta$ as follows:
For every $t\in\R$ there exists a unique circle which contains $w_1,w_2$ and
$\delta(t)$ and intersects the circle $\beta$ orthogonally.
(This is easy to see by applying a linear-fractional transformation
which sends the circle $\beta$ to the equator of the sphere, and sends the points
$w_1$ and $w_2$ to the poles.) The intersection of this circle with the circle
$\beta$ is the projection of $\delta(t)$. 
We thus find a continuous function $g\colon \C\to\C$ such that $\delta(t)$ 
projects to $\beta(g(t))$.
It is clear that the disk $D_{g(t)}$ contains $\delta(t)$. 
Since $\phi_0$ can be continued analytically along $\beta$, this shows that an
analytic continuation of $\phi_0$ along $\delta$ is also possible.
This completes the proof of Proposition~\ref{prop42} and Theorem \ref{thm2}.
\end{proof}

\begin{example} We construct a locally univalent meromorphic function for which
the diameter of the plane with respect to pull-back metric is infinite.

We define $f$ using a line complex \cite[Chap.\ VII]{Goldberg2008}.
Let $a,b,c,d$ be four distinct points in the Riemann sphere $\bC$.
We call them the base points.
Consider the cell decomposition $Y$ of $\bC$ shown in Fig.~\ref{line-complex1} (right).
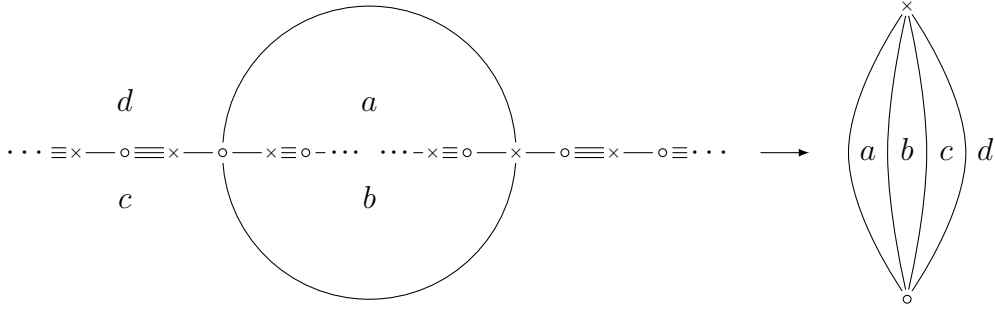
\begin{figure}[!htb]
\centering
\begin{tikzpicture}[scale=0.65,>=latex](-5,-5)(5,5)
\draw[->] (5,0) -- (6,0);
\node at (8,3) {\tiny $\times$};
\draw (8,-3) circle (0.08);
\draw plot [smooth, tension=0.8] coordinates { (7.97,2.8) (7.6,0) (7.97,-2.8) };
\draw plot [smooth, tension=0.8] coordinates { (7.9,2.83) (6.8,0) (7.90,-2.83) };
\draw plot [smooth, tension=0.8] coordinates { (8.03,2.8) (8.4,0) (8.03,-2.8) };
\draw plot [smooth, tension=0.8] coordinates { (8.1,2.83) (9.2,0) (8.10,-2.83) };
\node[anchor=mid] at (7.2,0) {$a$};
\node[anchor=mid] at (8.0,0) {$b$};
\node[anchor=mid] at (8.8,0) {$c$};
\node[anchor=mid] at (9.6,0) {$d$};
\node at (-10,0) {$\cdots$};
\node at (4,0) {$\cdots$};
\node at (-2.5,0) {$\Compactcdots$};
\node at (-3.5,0) {$\Compactcdots$};
\draw (-3,0) circle (3.0);
\filldraw[white] (0,0) circle (0.2);
\filldraw[white] (-6,0) circle (0.2);
\foreach \x in {-8,-6,-4.3,-1,1,3}
\draw (\x,0) circle (0.08);
\foreach \x in {-9,-7,-5,-1.7,0,2}
\node at (\x,0) {\tiny $\times$};
\foreach \x in {-9,-8,-7,-6,-1,0,1,2}
\draw[-] (\x+0.2,0) -- (\x+0.8,0);
\foreach \x in {-9.7,-5,-1.7,3}
{
\draw[-] (\x+0.2,0) -- (\x+0.5,0);
\draw[-] (\x+0.2,-0.1) -- (\x+0.5,-0.1);
\draw[-] (\x+0.2,0.1) -- (\x+0.5,0.1);
}
\foreach \x in {-8,1}
{
\draw[-] (\x+0.2,-0.1) -- (\x+0.8,-0.1);
\draw[-] (\x+0.2,0.1) -- (\x+0.8,0.1);
}
\foreach \x in {-4.3,-2.3}
\draw[-] (\x+0.2,0) -- (\x+0.4,0);
\node[anchor=mid] at (-3,1) {$a$};
\node[anchor=mid] at (-3,-1) {$b$};
\node[anchor=mid] at (-8,-1) {$c$};
\node[anchor=mid] at (-8,1) {$d$};
\end{tikzpicture}
\caption{Line complex of $G$.}
\label{line-complex1}
\end{figure}
It consists of two vertices $\times$ and $\circ$, four edges and four faces,
each face containing exactly one point of the set $\{ a,b,c,d\}$.
We label the faces by the base points they contain, and denote them
by $D_a,D_b,D_c,D_d$. If $f$ is a local homeomorphism $\C\to\bC$ 
whose asymptotic values are contained in the set $\{ a,b,c,d\}$
then the preimage $X=f^{-1}(Y)$ is a partition of the plane into vertices, edges and faces.
We label the vertices and faces of this partition by the same labels
as their images. 

Two such partitions are considered equivalent if they can be mapped to each other
by a homeomorphism of the plane. Two local homeomorphisms $f_1$ and $f_2$,
whose asymptotic values are contained in the set of base points,  
with equivalent partitions, satisfy $f_1=f_2\circ\phi$ where $\phi$
is a homeomorphism. A partition $X$ is completely determined
by its $1$-skeleton which is called the {\em line complex}. This is
a bipartite graph embedded in the plane whose vertices have the same
degree, equal to the number of base points.

The same construction can be made for a local homeomorphism from 
$\C^*=\C\backslash\{0\}$ or $\bC \backslash\{0\}$ to $\bC$.
When drawing a line complex, we usually do not draw the true preimage $f^{-1}(Y)$,
but an equivalent graph.

We suppose for simplicity that $\{ a,b,c,d\}\subset\C$, and consider the function $$g_1(z)=\frac{b\exp(z)-a}{\exp(z)-1}$$ which is
a universal covering of $\bC\backslash\{ a,b\}$ by $\C$. Its 
line complex consists of a chain infinite in both directions of the form
$$\cdots -\circ\equiv \times-\circ\equiv \times-\cdots .$$
Let $B$ be the region which is the union of $D_c$ and $D_d$ and the edge
between them. This region has infinitely many bounded preimages under $g_1$.
We choose one of them and call it $B_1$. 

Similarly, the function $$g_2(z)=\frac{d\exp(1/z)-c}{\exp(1/z)-1}$$
performs a universal covering of $\bC\backslash\{ c,d\}$ by
the punctured sphere $\bC\backslash\{0\}$. Its line complex is similar
to that of $g_1$, and we choose a component $B_2$ of the preimage
$g_2^{-1}(\bC\backslash B)$.

Since $g_1$ and $g_2$ map $\partial B_1$ and $\partial B_2\backslash\{0\}$
homeomorphically on the same curve $\partial B$ (with opposite orientations),
we can glue the restriction of $g_1$ on $\C\backslash B_1$
with the restriction of $g_1$ on $\C\backslash(\{0\}\cup B_2)$,
along a homeomorphism  $\psi$ 
between the boundary circles of these punctured disks,
such that $g_1=g_2\circ\psi$ on $\partial B_1$.
Since the homeomorphism $\psi$ is smooth, it has a quasiconformal extension
to a quasiconformal homeomorphism $B_1\to\bC\backslash B_2$,
which we denote by the
same letter. We can arrange that $\psi(0)=0$.
Thus we obtain a quasiregular local homeomorphism 
$$g(z)=\left\{\begin{array}{ll} g_1(z),& z\in\C\backslash B_1,\\
g_2(\psi(z)),& z\in B_1\backslash\{0\}\end{array}\right..$$

The line complex of $g$ in $\C^*$ is shown in Fig.~\ref{line-complex1} (left).
Since $g$ is quasiregular, there is a homeomorphism $\phi$ such that
$G=g\circ\phi$ is meromorphic in $\C^*$.

Next we consider the entire function $F(z)=G(\exp(iz))$. The line complex
of $F$ is shown in Fig.~\ref{line-complex2}.
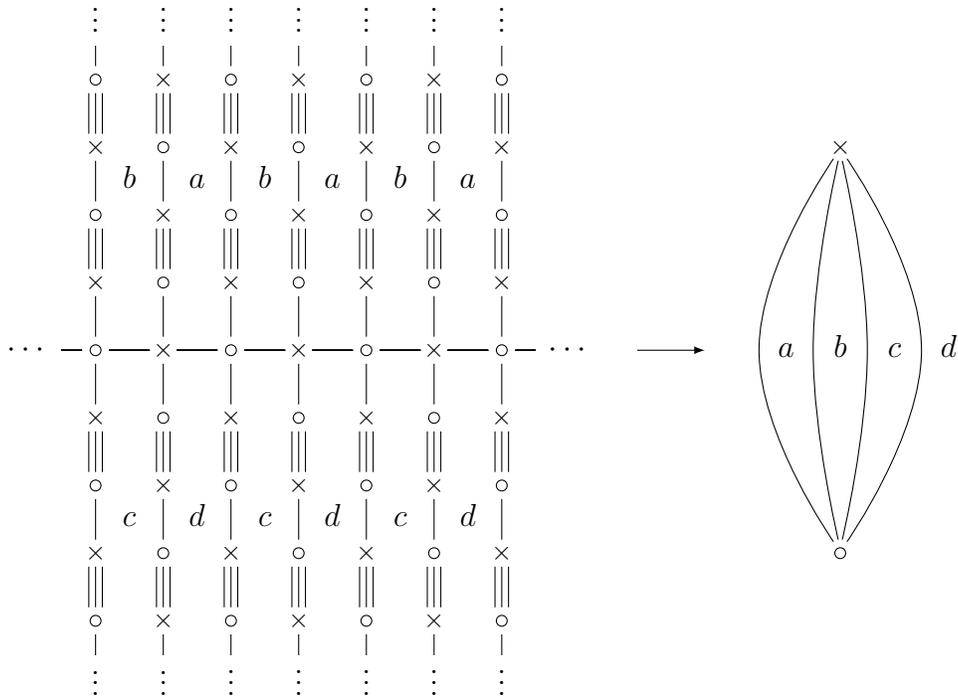
\begin{figure}[!htb]
\centering
\begin{tikzpicture}[scale=0.9,>=latex](-5,-5)(5,5)
\draw[->] (5,0) -- (6,0);
\node at (8,3) {\footnotesize $\times$};
\draw (8,-3) circle (0.08);
\draw plot [smooth, tension=0.8] coordinates { (7.97,2.8) (7.6,0) (7.97,-2.8) };
\draw plot [smooth, tension=0.8] coordinates { (7.9,2.83) (6.8,0) (7.90,-2.83) };
\draw plot [smooth, tension=0.8] coordinates { (8.03,2.8) (8.4,0) (8.03,-2.8) };
\draw plot [smooth, tension=0.8] coordinates { (8.1,2.83) (9.2,0) (8.10,-2.83) };
\node[anchor=mid] at (7.2,0) {$a$};
\node[anchor=mid] at (8.0,0) {$b$};
\node[anchor=mid] at (8.8,0) {$c$};
\node[anchor=mid] at (9.6,0) {$d$};
\foreach \x in {-2,0,2}
{
\node[anchor=mid] at (\x+0.5,2.5) {$a$};
\node[anchor=mid] at (\x-0.5,2.5) {$b$};
\node[anchor=mid] at (\x-0.5,-2.5) {$c$};
\node[anchor=mid] at (\x+0.5,-2.5) {$d$};
}
\node at (-4,0) {$\cdots$};
\node at (4,0) {$\cdots$};
\foreach \x in {-3,-2,...,3}
\foreach \y in {-4.8,5}
{
\node at (\x,\y) {$\vdots$};
}
\clip (-3.5,-4.5) rectangle (3.5,4.5);
\foreach \x in {-5,-3,...,5}
\foreach \y in {-5,-3,...,5}
{
\draw (\x,\y+1) circle (0.08);
\node at (\x,\y) {\footnotesize $\times$};
\draw (\x+1,\y) circle (0.08);
\node at (\x+1,\y+1) {\footnotesize $\times$};
\draw[-] (\x+0.2,0) -- (\x+0.8,0);
\draw[-] (\x+1.2,0) -- (\x+1.8,0);
}
\foreach \x in {-5,-4,...,5}
\foreach \y in {-5,-3,-1,0,2,4,6}
{
\draw[-] (\x,\y+0.2) -- (\x,\y+0.8);
}
\foreach \x in {-5,-4,...,5}
\foreach \y in {-6,-4,-2,1,3,5}
{
\draw[-] (\x,\y+0.2) -- (\x,\y+0.8);
\draw[-] (\x+0.1,\y+0.2) -- (\x+0.1,\y+0.8);
\draw[-] (\x-0.1,\y+0.2) -- (\x-0.1,\y+0.8);
}
\end{tikzpicture}
\caption{Line complex of $F$.}
\label{line-complex2}
\end{figure}
It remains to show that the pull-back of the spherical metric via $F$ has
infinite diameter. To do this we consider two simple curves in $\bC$:

A curve $A$ from $a$ to $d$ which is contained in the union of faces
$D_a$ and $D_d$ of $Y$ with their common boundary edge, and
a curve
$B$ from $b$ to $c$ which is contained in the union of faces $D_b$ and $D_c$
of $Y$ with their common boundary edge.

Evidently these curves have disjoint closures in $\bC$.
Each of these curves has infinitely many disjoint $F$-preimages which are
curves beginning and ending at $\infty$. Let us call these
preimages $\alpha_j$ and $\beta_j$, $j\in\Z$, and assume that they are
enumerated in the natural order, so that $\alpha_k$ separates
all $\alpha_j,\beta_j$ with $j<k$ from
$\beta_k$ and all $\alpha_j,\beta_j$ with $j>k$.

Now consider two points $p$ and $q$ in $\C$ which are separated by $2N$ curves
$\alpha_j$ and $\beta_j$. Let $\gamma$ be any curve with endpoints $p$ and $q$.
Then the image $F(\gamma)$ must hit $A$ and $B$ alternately, at least $N$
times each, so the length of this image and of $\gamma$ itself is at least
$(2N-1)\delta$ where $\delta$ is the distance between $A$ and $B$.
\end{example}

\begin{remark} One can obtain an explicit representation of the function $F$.
It can be shown that $F$ is a ratio of two solutions of the Mathieu equation
$$w''+(\cos(z/2)+\lambda)w=0,$$
where $\lambda$ is subject to the condition that this ratio has period $2\pi$.
\end{remark}

\begin{ack}
We thank Qinfeng Li whose questions stimulated this paper.
We also thank the referee for valuable comments.
\end{ack}

\end{document}